\theoremstyle{plain}
\newtheorem{theorem}{Theorem}[section]
\newtheorem{lemma}[theorem]{Lemma}
\newtheorem{proposition}[theorem]{Proposition}
\newtheorem{corollary}[theorem]{Corollary}
\newtheorem{definition}[theorem]{Definition}
\theoremstyle{definition}
\newtheorem{question}{Question}[section]
\newtheorem{claim}{Claim}
\theoremstyle{remark}
\numberwithin{equation}{section}
\DeclareMathOperator{\cf}{cf}
\DeclareMathOperator{\dom}{dom}
\DeclareMathOperator{\supp}{supp}
\DeclareMathOperator{\next}{\mathop{next}}
\DeclareMathOperator{\val}{\mathop{val}}
\def\trestriction{{\restriction}}
\newcommand{\forces}[2]{\Vdash_{#1} \mbox{``\,} #2 \mbox{''}}
\newcommand{\tie}[1]
{\,\raise-5pt\hbox{
${\buildrel{\displaystyle{\rhd}\!\!{\lhd}}\over{
\scriptstyle
#1}}$}\,}
\title{Asymmetric tie-points and almost clopen subsets of $\mathbb {N}^*$
}
\author[A. Dow]{Alan Dow}
\address{Department of Mathematics,
University of North Carolina at Charlotte, 
Charlotte, NC 28223}
\email{adow@uncc.edu}
\author[S. Shelah]{Saharon Shelah}
\address{Department of Mathematics, Rutgers University, Hill Center,
 Piscataway, 
 New Jersey, U.S.A. 08854-8019}
\curraddr{Institute of Mathematics\\Hebrew University\\
Givat Ram, Jerusalem 91904, Israel}
\email{shelah@math.rutgers.edu}
\date{\today}
\thanks{The research of the first author was supported by
 the NSF grant No. NSF-DMS 1501506.
The research of the second  author was
 supported by
   the United States-Israel Binational Science Foundation 
(BSF Grant
   no. 2010405), and by the 
 NSF grants No. NSF-DMS 1101597  and 136974.}
\keywords{
ultrafilters, cardinal invariants of continuum
}
\subjclass{54D80, 03E15 }
\begin{document}
\begin{abstract}
  A tie-point  of a compact space is analogous to a cut-point:
the complement of the point falls apart into two relatively clopen
non-compact subsets. Set-theoretically a tie-point of $\mathbb N^*$
is an
ultrafilter whose dual maximal  ideal can be generated by
the union of two non-principal  mod finite orthogonal ideals.
We review some of the many consistency results
that have depended on the construction of tie-points of $\mathbb
N^*$. One especially important application, due to Velickovic,
was to the existence of non-trivial involutions on $\mathbb N^*$. 
A tie-point of $\mathbb N^*$ has been called symmetric if
it is the unique fixed point of an involution. 
We define the notion of an almost clopen set to be the closure of
one of the proper relatively clopen  subsets of the complement of a
tie-point.  We explore asymmetries of almost clopen subsets of
$\mathbb N^*$ in the sense of how may an almost clopen set differ
from its natural complementary almost clopen set.
\end{abstract}
\maketitle

\bibliographystyle{plain}

\section{Introduction}

In this introductory section we review some background to motivate our
interest in further 
study  of tie-points and almost clopen sets.
The Stone-{\v C}ech compactification of the integers $\mathbb N$, is
denoted as $\beta\mathbb N$ and, as a set, is equal to $\mathbb N$
together with all the free ultrafilters on $\mathbb N$. The remainder
 $\mathbb N^* = \beta\mathbb N\setminus \mathbb N$ can be
 topologized as a
 subspace of the Stone space of the power set  of $\mathbb N$ as a
 Boolean algebra and, in particular, for a subset $a$ of $\mathbb N$,
 the set $a^* $ of all free ultrafilters with $a$ as an element, is
 a basic clopen subset of $\mathbb N^*$.  Set-theoretically it is
 sometimes more 
 convenient to work with the set of ordinals $\omega$ in place of the
 natural numbers $\mathbb N$, and the definitions of $\omega^*$ and
  $a^*$ for $a\subset \omega$ are analogous.

 A point $x$ of a space $X$ is a butterfly point (or $b$-point
  \cite{Sapirbpoint}) if there are sets $D,E\subset X\setminus \{x\}$
  such that $\{x\} = \overline{D}\cap \overline{E}$. In
   \cite{DSh1}, the authors introduced the tie-point
terminology. 

 \begin{definition} 
A point $x$ is a tie-point of a space $X$ if there are closed sets
$A,B$ of $X$ such that $X=A\cup B$, $\{x\}=A\cap B$ and $x$ is a 
limit  point of each of $A$ and $B$. 
 We picture (and denote) this as  $X = A\tie{x} B$ where $A,
B$ are the closed sets which have a unique common accumulation point
$x$ and
say that $x$ is a tie-point as witnessed by $A,B$.
 \end{definition}

In this note the focus is on the local properties of $x$ with respect
to each of the closed sets $A$ and $B$ such that
$A\tie{x}B$ in the case when $A,B$ witness that $x$ is a tie-point.
For this reason we
introduce the notion of an almost clopen subset of $\mathbb N^*$.

\begin{definition}
A set $A\subset \mathbb N^*$ is almost clopen if $A$ is the closure of
an open subset of $\mathbb N^*$ and has a unique boundary point, which
we denote $x_A$. 
\end{definition}

\begin{proposition}
If $A$ is an almost clopen subset of $\mathbb N^*$, then $B =
\{x_A\}\cup (\mathbb N^*\setminus A)$ is almost clopen and
$x_B = x_A$. In addition $x_A$ is a tie-point as witnessed by
 $A,B$.
\end{proposition}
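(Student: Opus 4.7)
The plan is to unpack the hypothesis that $A$ is almost clopen into information about the interior and boundary of $A$, and then verify each piece of the conclusion by a short direct calculation.

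From the definition, $A$ is closed (being a closure) and $\partial A = \{x_A\}$, so since $\partial A = A\setminus \operatorname{int}(A)$ for closed $A$, we have $\operatorname{int}(A) = A\setminus \{x_A\}$. Writing $A = \overline{U}$ with $U\subseteq \mathbb N^*$ open, $U\subseteq \operatorname{int}(A)$, and the sandwich
\[
A = \overline{U} \subseteq \overline{\operatorname{int}(A)} = \overline{A\setminus\{x_A\}} \subseteq A
\]
gives $A = \overline{A\setminus\{x_A\}}$; in particular $x_A$ is a limit point of $A$.

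Next I compute $B$ as the closure of an open set. Since $A$ is closed, $\mathbb N^*\setminus A$ is open, and using the general identity $\partial(\mathbb N^*\setminus A) = \partial A = \{x_A\}$ together with $\overline{V} = V\cup \partial V$ for open $V$, I obtain
\[
\overline{\mathbb N^*\setminus A} = (\mathbb N^*\setminus A)\cup \{x_A\} = B,
\]
so $B$ is the closure of an open subset of $\mathbb N^*$. For its boundary, note $\mathbb N^*\setminus B = A\setminus \{x_A\} = \operatorname{int}(A)$, whose closure is $A$ by the first step; hence $\partial B = B\cap \overline{\mathbb N^*\setminus B} = B\cap A = \{x_A\}$, showing that $B$ is almost clopen with $x_B = x_A$.

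The tie-point statement is then immediate: $A\cup B = \mathbb N^*$, $A\cap B = \{x_A\}$, the point $x_A$ is a limit point of $A$ by the first step, and $x_A$ is a limit point of $B$ because $x_A \in \overline{\mathbb N^*\setminus A}\subseteq \overline{B\setminus\{x_A\}}$. The only ingredient that really needs the hypothesis is the identity $A = \overline{\operatorname{int} A}$, which uses that $A$ is a closure of an open set and not merely a closed set with a single boundary point; everything else is a routine manipulation of closures and complements. For this reason I do not expect any substantive obstacle.
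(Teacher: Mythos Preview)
Your argument is correct in every detail. The paper states this proposition without proof, treating it as an immediate consequence of the definitions, and your careful unpacking of the closure, interior, and boundary identities is precisely the routine verification the authors left to the reader.
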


\begin{definition}\cite{DSh1}
  A tie-point  $x$ is a symmetric
tie-point of $\mathbb N^*$ if there is a pair
$A,B$ witnessing that $x$ is a tie-point and if
there is a homeomorphism $h: A \rightarrow  B$  satisfying
 that $h(x)=x$. 
\end{definition}

If $A$ is almost clopen, then we refer to $B = \{x_A\}\cup (\mathbb
N^* \setminus A)$ as the almost clopen complement of  $A$. 
A more set-theoretically inclined reader would surely prefer a
staightforward translation of almost clopen to properties of ideals 
of subsets of $\mathbb N$ and the usual 
mod finite ordering $\subset^*$. 

\begin{definition} If $A$ is any  subset of $\mathbb N^*$,
 then $\mathcal I_A$ is defined as the set
 $\{ a\subset \mathbb N : a^* \subset A\}$.
\end{definition}

For any family  $\mathcal A$ of subsets of $\mathbb N$ (or $\omega$),
we define $\mathcal A^\perp$ to be the orthogonal ideal
$\{ b\subset \mathbb N :
(\forall a\in  \mathcal A)\  
b\cap a=^*\emptyset\}$. Let us note that if $\mathcal I$
is an ideal that
 has no $\subset^*$-maximal element,
 then the ideal generated by
  $\mathcal I\cup \mathcal I^\perp$ is a proper ideal.

\begin{lemma}
If $A$ is an almost clopen subset of $\mathbb N^*$ 
with almost clopen complement $B$,
 then 
$\mathcal I_A\cap \mathcal I_B $ is the Frechet ideal 
  $[\mathbb N]^{<\aleph_0}$,
 $\mathcal I_B = \mathcal I_A^\perp$,
 and  $x_A$ is the unique ultrafilter
that is
 disjoint from $\mathcal I_A\cup \mathcal I_B$.
\end{lemma}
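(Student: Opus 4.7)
The plan is to work throughout with the observation that, since $A$ is the closure of an open set with unique boundary point $x_A$, the set $A\setminus\{x_A\}$ is precisely the interior of $A$, hence open; by the previous proposition the same is true with $A$ and $B$ interchanged, so $\mathbb{N}^*\setminus A = B\setminus\{x_A\}$ is the interior of $B$. The first assertion is almost immediate: if $a\in\mathcal I_A\cap\mathcal I_B$, then $a^*\subseteq A\cap B=\{x_A\}$, and since a basic set $a^*$ is either empty or infinite, $a$ must be finite.

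For the orthogonality claim I would argue both inclusions. If $b\in\mathcal I_B$ and $a\in\mathcal I_A$, then $(a\cap b)^*=a^*\cap b^*\subseteq A\cap B=\{x_A\}$, so $a\cap b$ is finite and $b\in\mathcal I_A^\perp$. Conversely, given $c\in\mathcal I_A^\perp$, I would show $c^*\subseteq B$ by contradiction: if there were $y\in c^*\cap(A\setminus\{x_A\})$, then since $A\setminus\{x_A\}$ is open I could pick a basic clopen set $d^*$ with $y\in d^*\subseteq A\setminus\{x_A\}$, and set $a=c\cap d$. Then $a^*\subseteq A$, so $a\in\mathcal I_A$, hence by orthogonality $a=c\cap a$ is finite, contradicting $y\in a^*$.

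For uniqueness of $x_A$, note first that $x_A$ itself is disjoint from $\mathcal I_A\cup\mathcal I_B$: if some $a\in x_A$ lay in $\mathcal I_A$, then $a^*$ would be a clopen neighbourhood of $x_A$ inside $A$, making $x_A$ an interior point of $A$ and not a boundary point, and symmetrically for $\mathcal I_B$. For uniqueness, suppose $y\neq x_A$ is an ultrafilter disjoint from $\mathcal I_A\cup\mathcal I_B$. Then $y$ lies in the interior of $A$ or in the interior of $B$ (since $\mathbb N^*\setminus\{x_A\}$ is the union of these two open sets), so $y$ has a basic clopen neighbourhood $a^*\subseteq A$ or $b^*\subseteq B$, forcing $a\in y\cap\mathcal I_A$ or $b\in y\cap\mathcal I_B$, a contradiction.

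No step seems to present a substantial obstacle, since the argument is essentially a translation between the topological description of an almost clopen set (closure of an open set with a single boundary point) and the ideal-theoretic description. The only point requiring a moment of care is confirming that $A\setminus\{x_A\}$ really is open; this rests on the fact that the boundary of $A$ equals the singleton $\{x_A\}$ together with $A$ being closed, so that every point of $A$ other than $x_A$ is an interior point. Once this is in hand, everything else is a routine manipulation of basic clopen sets in $\mathbb{N}^*$.
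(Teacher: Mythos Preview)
Your proof is correct. The paper actually states this lemma without proof, treating it as a routine translation between the topological and ideal-theoretic descriptions, so there is no argument in the paper to compare against; your write-up supplies exactly the sort of verification the authors left to the reader.
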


Almost clopen sets (and tie-points) first arose implicitly in the work
of Fine and Gillman \cite{FG} in the investigation of extending
continuous functions on dense subsets of $\mathbb N^*$. A subset
$Y$  
of a space $X$ is $C^*$-embedded if every bounded continuous
real-valued function on $Y$ can be continuously extended to all of
$X$. The character of a point $x\in \mathbb N^*$ is the minimal
cardinality of a filter base for $x$ as an ultrafilter on $\mathbb
N$. 

\begin{proposition}(\cite{FG})
If $x$ is a tie-point of $\mathbb N^*$, then $\mathbb N^*\setminus
\{x\}$ is not $C^*$-embedded in $\mathbb N^*$. Every point of
character $\aleph_1$ is a tie-point of $\mathbb N^*$.
\end{proposition}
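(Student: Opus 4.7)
My plan for the first assertion is direct. If $A,B$ witness the tie-point $x$, then $A\setminus\{x\}$ and $B\setminus\{x\}$ are each the intersection of a closed subset of $\mathbb{N}^*$ with the open set $\mathbb{N}^*\setminus\{x\}$, hence closed in that subspace; since they are disjoint and cover $\mathbb{N}^*\setminus\{x\}$, both are clopen there. The characteristic function $f$ of $A\setminus\{x\}$ is therefore a bounded continuous $\{0,1\}$-valued function on $\mathbb{N}^*\setminus\{x\}$. Any continuous extension of $f$ to $\mathbb{N}^*$ would have to assign $x$ the value $0$ (since $x$ is a limit point of $A$) and also the value $1$ (since $x$ is a limit point of $B$), a contradiction.

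For the second assertion, let $\mathcal U$ be the ultrafilter at $x$, of character $\aleph_1$. My plan is to produce mutually orthogonal non-principal ideals $\mathcal I_A$ and $\mathcal I_B$ whose union generates the dual maximal ideal $\mathcal F$; by the lemma above, the closures of $\bigcup\{b^* : b\in\mathcal I_A\}$ and $\bigcup\{c^* : c\in\mathcal I_B\}$ will then be almost clopen complements meeting only at $x$, showing that $x$ is a tie-point. Using only countable pseudo-intersections (available in ZFC), first refine any $\aleph_1$-sized base of $\mathcal U$ into a $\subseteq^*$-decreasing base $\langle a_\alpha : \alpha<\omega_1\rangle$ with $a_\alpha\setminus a_\gamma$ infinite whenever $\alpha<\gamma$. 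Then, by recursion on $\alpha<\omega_1$, choose infinite disjoint $b_\alpha,c_\alpha\subseteq\mathbb N$ satisfying
\begin{enumerate}
\item[(i)] $b_\alpha\cup c_\alpha =^* \mathbb N\setminus a_\alpha$;
\item[(ii)] $b_\beta\subseteq^* b_\alpha$ and $c_\beta\subseteq^* c_\alpha$ for $\beta\le\alpha$;
\item[(iii)] $a_\beta\cap b_\alpha$ and $a_\beta\cap c_\alpha$ are both infinite for each $\beta<\alpha$.
\end{enumerate}
At a successor $\gamma+1$, split the new increment $a_\gamma\setminus a_{\gamma+1}$ into two infinite halves and add one to each of $b_\gamma,c_\gamma$; this delivers (iii) at the new index $\beta=\gamma$ because both halves lie in $a_\gamma$. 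At a limit $\alpha$ with cofinality $\omega$, fix $\alpha_n\uparrow\alpha$, form diagonal pseudo-unions of $\{b_{\alpha_n}\}$ and $\{c_{\alpha_n}\}$ (keeping them disjoint), then split whatever portion of $\mathbb N\setminus a_\alpha$ remains uncovered between the two sides.

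After the construction, let $\mathcal I_A,\mathcal I_B$ be the ideals generated by $\{b_\alpha\}$ and $\{c_\alpha\}$ respectively. Orthogonality follows from (ii) together with $b_\alpha\cap c_\alpha=\emptyset$; generation of $\mathcal F$ follows from (i) combined with the base property that every element of $\mathcal F$ lies $\subseteq^*$ inside some $\mathbb N\setminus a_\alpha$; and (iii) places $x$ in the closure of both $\bigcup_\alpha b_\alpha^*$ and $\bigcup_\alpha c_\alpha^*$, since each basic neighborhood $a_\beta^*$ of $x$ meets some $b_\alpha^*$ and some $c_\alpha^*$. The main obstacle is the limit-stage bookkeeping: the diagonal pseudo-unions must simultaneously preserve the disjointness of the two sides and fill in the set $(\mathbb N\setminus a_\alpha)\setminus\bigcup_n(b_{\alpha_n}\cup c_{\alpha_n})$, which can be infinite precisely when $a_\alpha$ is a proper pseudo-intersection of $\{a_{\alpha_n}\}$. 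Condition (iii) is, by contrast, inherited automatically from the previous stages through the pseudo-union, so the only real balancing act at limits is simultaneous coverage and disjointness.
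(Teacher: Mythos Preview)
The paper does not prove this proposition; it simply attributes it to Fine--Gillman and moves on. So there is nothing in the paper to compare your argument against, and your outline is essentially the standard proof. A few remarks on the write-up itself.

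In the first assertion you have swapped the two values: the characteristic function of $A\setminus\{x\}$ equals $1$ on $A\setminus\{x\}$ and $0$ on $B\setminus\{x\}$, so continuity at a limit of $A$ forces the value $1$, and continuity at a limit of $B$ forces the value $0$. The contradiction is the same, but the parenthetical justifications are reversed.

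In the second assertion, the ``lemma above'' in the paper only states the forward implication (from an almost clopen $A$ to the ideal description); you are using the easy converse, which is fine but should be stated rather than cited. At the successor step your $b_{\gamma+1},c_{\gamma+1}$ are only almost disjoint, not disjoint, since $b_\gamma$ may meet the $c'$-half of $a_\gamma\setminus a_{\gamma+1}$ in finitely many points; either relax ``disjoint'' to ``almost disjoint'' throughout (which suffices for everything) or trim those finite overlaps as you go. At the limit stage your diagnosis is correct and the fix is routine: choose integers $k_0<k_1<\cdots$ so that on $[k_n,\infty)$ the pairs $(b_{\alpha_m},c_{\alpha_m})$ for $m\le n$ are already nested, pairwise disjoint, and contained in $\mathbb N\setminus a_\alpha$; then let $b_\alpha\cap[k_n,k_{n+1})=b_{\alpha_n}\cap[k_n,k_{n+1})$ and similarly for $c_\alpha$, and finally distribute the (possibly infinite) leftover $(\mathbb N\setminus a_\alpha)\setminus\bigcup_n(b_{\alpha_n}\cup c_{\alpha_n})$ between the two sides. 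With that, (i)--(iii) all go through and the recursion closes.
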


It was shown in \cite{vDKvM}  to be consistent with ZFC that $\mathbb
N^*\setminus \{x\}$ is  $C^*$-embedded for all $x\in \mathbb N^*$.
It was also shown by Baumgartner \cite{BaPFA} that their result holds
in models of the Proper Forcing Axiom (PFA).

\begin{proposition}(\cites{vDKvM, BaPFA})
  The proper forcing axiom implies $\mathbb N^*\setminus \{x\}$ is
  $C^*$-embedded in $\mathbb N^*$ for all $x\in \mathbb N^*$
\end{proposition}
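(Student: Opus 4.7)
The plan is to prove the contrapositive: supposing some $\mathbb{N}^*\setminus\{x\}$ fails to be $C^*$-embedded in $\mathbb{N}^*$, I will derive a contradiction with PFA. The first step is the standard equivalence (using normality of $\mathbb{N}^*$): a point $x$ fails this $C^*$-embedding property exactly when $x$ is a butterfly point in the sense of the introduction. Dualizing through the Stone representation, this says that the dual ideal of $x$ is generated by two subideals $\mathcal{J}_0,\mathcal{J}_1$, each strictly smaller than the dual of $x$, with $x$ the unique ultrafilter disjoint from $\mathcal{J}_0\cup\mathcal{J}_1$. Fix such $\mathcal{J}_0,\mathcal{J}_1$.

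Next I would split on the cofinalities of these ideals. Under PFA we have $\mathfrak{p}=\mathfrak{c}=\aleph_2$, hence Martin's axiom at $\aleph_1$. If either $\mathcal{J}_i$ admits a cofinal subfamily of size $\aleph_1$, a standard $\mathfrak{p}$-diagonalization in the filter dual to the other ideal produces an infinite $c\subset\mathbb N$ almost disjoint from every element of $\mathcal{J}_0\cup\mathcal{J}_1$ with $c\notin x$; extending $\{c\}\cup\{\mathbb N\setminus a:a\in\mathcal{J}_0\cup\mathcal{J}_1\}$ to an ultrafilter $y\neq x$ yields a second ultrafilter disjoint from $\mathcal{J}_0\cup\mathcal{J}_1$, contradicting uniqueness. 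This handles the situation already treated under MA in \cite{vDKvM}.

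It remains to treat the case where both ideals are cofinally generated at level $\aleph_2$, which is the genuinely PFA-level part of the argument due to Baumgartner \cite{BaPFA}. Here I would design a proper poset whose generic adds a set $c\subseteq\mathbb N$ diagonalizing an $\omega_1$-sequence of pairs $(a_\alpha,b_\alpha)\in\mathcal{J}_0\times\mathcal{J}_1$, using the orthogonality coming from $\mathcal{J}_0\cup\mathcal{J}_1$ generating a proper ideal to show that the finite commitments $c\supseteq^* a_\alpha$ and $c\cap b_\alpha=^*\emptyset$ are simultaneously satisfiable. Applying PFA to this poset with $\aleph_1$ naturally chosen dense sets extracts the required $c$, which as before produces a distinct ultrafilter contradicting the uniqueness of $x$. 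The main obstacle — the step that genuinely needs proper forcing rather than MA — is verifying properness of this poset: one must show that for every countable elementary submodel $M$, any condition has a generic extension in $M$ respecting the countably many commitments from $M$, and this requires a delicate analysis of how the orthogonal ideal structure interacts with the finite approximations. The combinatorial setup is routine; it is this properness verification that forms the technical core of the proof.
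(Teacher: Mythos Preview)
The paper does not give its own proof of this proposition; it is stated with citations to \cite{vDKvM} and \cite{BaPFA} and left at that. So there is nothing in the paper to compare against, and your sketch has to stand on its own merits.

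It does not. The fatal problem is the object you are trying to build. In Case~1 you aim for an infinite $c\subset\mathbb N$ with $c\notin x$ that is almost disjoint from every member of $\mathcal J_0\cup\mathcal J_1$, and in Case~2 you say the generic $c$ ``as before produces a distinct ultrafilter,'' referring to the same mechanism. But such a $c$ cannot exist. You yourself arranged that $\mathcal J_0\cup\mathcal J_1$ generates the maximal ideal dual to $x$; hence any infinite $c$ almost disjoint from every element of $\mathcal J_0\cup\mathcal J_1$ satisfies $c\cap d=^*\emptyset$ for every $d\notin x$, equivalently $c\subset^* u$ for every $u\in x$. A free ultrafilter has no infinite pseudointersection, so there is no such $c$ at all, in $x$ or out of it. Your ``$\mathfrak p$-diagonalization in the filter dual to the other ideal'' is, once you also demand orthogonality to the first ideal, exactly an attempt to find a pseudointersection of $x$.

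Case~2 has a second, independent problem: the poset you describe does something different from what you then use. Its conditions enforce $c\supseteq^* a_\alpha$ and $c\cap b_\alpha=^*\emptyset$, i.e.\ $c$ \emph{separates} a fragment of $\mathcal J_0$ from a fragment of $\mathcal J_1$; it is not orthogonal to both. A separating set does not yield a second ultrafilter disjoint from $\mathcal J_0\cup\mathcal J_1$, so the ``as before'' does not apply. You are also implicitly assuming $\mathcal J_0\perp\mathcal J_1$ to make the finite commitments compatible, but the ideals $\mathcal A_0^\perp$, $\mathcal A_1^\perp$ coming from a butterfly point need not be mutually almost disjoint. Baumgartner's argument does go through a proper poset, but both the target of the generic and the way it produces a contradiction are different from what you have outlined; you would need to rethink what the forcing is supposed to accomplish before the properness verification becomes the issue.
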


\begin{corollary} PFA implies
that  there are no almost clopen sets and no tie-points in
  $\mathbb N^*$.
\end{corollary}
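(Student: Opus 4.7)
The plan is to derive the corollary by combining the two previously stated results directly: the Fine--Gillman proposition guarantees that a tie-point produces a point whose complement is not $C^*$-embedded, while the preceding proposition asserts that under PFA no such point can exist. So the argument is essentially a one-line contrapositive, and the only care needed is in handling almost clopen sets as well as tie-points.

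First I would argue the tie-point half. Assume PFA and suppose toward a contradiction that some $x \in \mathbb{N}^*$ is a tie-point, witnessed by closed sets $A, B$ with $A \cup B = \mathbb{N}^*$, $A \cap B = \{x\}$, and $x$ a limit point of each. By the Fine--Gillman proposition stated above, $\mathbb{N}^* \setminus \{x\}$ is then not $C^*$-embedded in $\mathbb{N}^*$. But under PFA the preceding proposition says $\mathbb{N}^* \setminus \{y\}$ is $C^*$-embedded for \emph{every} $y \in \mathbb{N}^*$, a contradiction. Hence PFA implies $\mathbb{N}^*$ has no tie-points.

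Next I would dispatch the almost clopen case by reducing it to the tie-point case already handled. Suppose $A \subset \mathbb{N}^*$ is almost clopen with unique boundary point $x_A$. By the earlier proposition, setting $B = \{x_A\} \cup (\mathbb{N}^* \setminus A)$ produces a closed set $B$ with $A \cap B = \{x_A\}$, $A \cup B = \mathbb{N}^*$, and $x_A$ a limit point of both $A$ and $B$ (since $A$ is the closure of a non-compact open set with $x_A$ as its unique boundary point, and similarly for $B$). Thus $x_A$ is a tie-point, contradicting what was just proved. So under PFA there are no almost clopen sets either.

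There is no real obstacle here: both the Fine--Gillman step and the PFA $C^*$-embedding step are invoked as black boxes, and the almost clopen case is an immediate translation through the proposition asserting that every almost clopen set induces a tie-point. The only mildly delicate point is checking that in the almost clopen reduction one really does get a tie-point in the sense of the definition, i.e.\ that $x_A$ is a limit point of both $A$ and $B$; this follows because $A$ is the closure of an open set $U$ with $x_A \notin U$ but $x_A \in \overline{U} = A$, and symmetrically $B$ is the closure of the open set $\mathbb{N}^* \setminus A$.
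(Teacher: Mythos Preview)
Your proof is correct and follows exactly the intended route: the paper leaves this corollary without proof, as it is meant to be an immediate consequence of the two preceding propositions together with the observation that an almost clopen set yields a tie-point. Your write-up simply spells out that deduction.
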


 Almost clopen sets arise in the study of minimal  extensions of Boolean
 algebras (\cite{Kopp}) and in the application of this method of
 construction for building a variety of counterexamples
  (e.g. \cites{KoszTAMS,JKS,Rabus, DSh3}). The next application of
  almost clopen subsets of $\mathbb N^*$ were to the study of
  non-trivial automorphisms of $\mathcal P(\mathbb N)/fin$, or
  non-trivial autohomeomorphisms of $\mathbb N^*$. Kat\v etov
  \cite{Katetov}  proved
  that the set of fixed points of an autohomeomorphism 
  of $\beta\mathbb N$  will be a clopen set. It is immediate
  from Fine and Gillman's work
  in
  \cite{FG} that every $P$-point of character of $\aleph_1$
  is a fixed point of a non-trivial autohomeomorphism of
   $\mathbb N^*$.

\begin{definition}
  A point $x$ of $\mathbb N^*$ is a $P$-point if
  the ultrafilter $x$  is countably complete mod finite.
  For a cardinal $\kappa$,
an ultrafilter
$x$ on $\mathbb N$ is a simple $P_\kappa$-point if
$x$ has a base well-ordered by mod finite inclusion
of order type $\kappa$. 
\end{definition}

\begin{proposition}\cite{FG}
  If $A$ is an almost clopen subset of $\mathbb N^*$
  and 
  $x_A$ is a simple $P_{\aleph_1}$-point of $\mathbb N^*$,
  then
  \begin{enumerate}
  \item  $A$ is homeomorphic to $\mathbb N^*$,
  \item  $x_A$ is a symmetric tie-point,
  \item there is an autohomeomorphism $f$ on $N^*$ such
    that $\{x\}$ is the only fixed point of $f$.
\end{enumerate}
\end{proposition}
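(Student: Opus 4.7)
The plan is to build a fixed-point-free involution $\sigma:\mathbb{N}\to\mathbb{N}$ whose Stone extension $\sigma^{*}=(\beta\sigma)\trestriction\mathbb{N}^{*}$ witnesses all three conclusions simultaneously. I would start by fixing a $\subset^{*}$-decreasing base $\{a_\alpha:\alpha<\omega_1\}$ for $x_A$, which exists because $x_A$ is a simple $P_{\aleph_1}$-point.

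The first step produces canonical partitions. For each $\alpha$, the clopen set $(\mathbb{N}\setminus a_\alpha)^{*}$ misses $x_A$, so its intersections with the complementary relatively open sets $A\setminus\{x_A\}$ and $B\setminus\{x_A\}$ are both clopen in $\mathbb{N}^{*}$ and hence of the form $L_\alpha^{*}, R_\alpha^{*}$ for some $L_\alpha\in\mathcal{I}_A, R_\alpha\in\mathcal{I}_B$ with $\mathbb{N}\setminus a_\alpha=^{*}L_\alpha\sqcup R_\alpha$. Orthogonality of the two ideals together with $a_\beta\subset^{*}a_\alpha$ for $\alpha<\beta$ yields $L_\alpha\subset^{*}L_\beta$ and $R_\alpha\subset^{*}R_\beta$. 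The tower $\{L_\alpha\}$ is cofinal in $\mathcal{I}_A$ (and symmetrically $\{R_\alpha\}$ in $\mathcal{I}_B$) because any $b\in\mathcal{I}_A$ lies in the dual ideal of $x_A$, hence by the simple $P_{\aleph_1}$-point hypothesis $b\cap a_\alpha=^{*}\emptyset$ for some $\alpha$, and then $b\subset^{*}L_\alpha$ by orthogonality.

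The second step is an $\omega_1$-recursion producing involutions $\sigma_\alpha$ on $L_\alpha\cup R_\alpha$ that swap $L_\alpha$ with $R_\alpha$ and extend one another literally. At a successor $\alpha+1$ I would extend $\sigma_\alpha$ across the infinite set $L_{\alpha+1}\setminus L_\alpha$ by matching it bijectively with $R_{\alpha+1}\setminus R_\alpha$, absorbing any finite cardinality mismatch into $a_{\alpha+1}$. At a limit $\alpha$ I would pick $a_\alpha\in x_A$ below all earlier $a_\beta$ via the $P$-point property of $x_A$, extract $L_\alpha, R_\alpha$ from the canonical partition, and upgrade the mod-finite containments $L_\beta\subseteq^{*}L_\alpha$ to literal ones via a further use of the $P$-point property and finite modifications of $a_\alpha$. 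Stitching the $\sigma_\alpha$'s gives an involution on $\bigcup_\alpha(L_\alpha\cup R_\alpha)$, which I would extend by any free involution on the residual set (contained in $\bigcap_\alpha a_\alpha$), producing a fixed-point-free involution $\sigma$ of $\mathbb{N}$ with $\sigma[L_\alpha]=R_\alpha$ and $\sigma[a_\alpha]=a_\alpha$ for every $\alpha$.

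The three conclusions then follow from the properties of $\sigma^{*}$. Since $\sigma$ preserves each $a_\alpha$ and these form a base at $x_A$, $\sigma^{*}(x_A)=x_A$; since $\sigma$ swaps the cofinal towers of $\mathcal{I}_A$ and $\mathcal{I}_B$, $\sigma^{*}$ swaps $A\setminus\{x_A\}$ with $B\setminus\{x_A\}$. Any free $p\ne x_A$ lies on exactly one side with $\sigma^{*}(p)$ on the other, so $x_A$ is the unique fixed point of $\sigma^{*}$, yielding (3); and $h=\sigma^{*}\trestriction A$ witnesses (2). For (1), the projection $\pi:\mathbb{N}^{*}\to\mathbb{N}^{*}/\sigma^{*}$ restricts to a continuous bijection from compact $A$ onto Hausdorff $\mathbb{N}^{*}/\sigma^{*}$ (each orbit has exactly one representative in $A$), hence a homeomorphism $A\cong\mathbb{N}^{*}/\sigma^{*}$. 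By Stone duality the quotient is the Stone space of the Boolean algebra of $\sigma$-invariant subsets of $\mathbb{N}$ modulo finite, and since $\sigma$ has no fixed points this algebra identifies naturally with $\mathcal{P}(\mathbb{N}/\sigma)/\mathrm{fin}\cong\mathcal{P}(\mathbb{N})/\mathrm{fin}$; thus $\mathbb{N}^{*}/\sigma^{*}\cong\mathbb{N}^{*}$ and hence $A\cong\mathbb{N}^{*}$, proving (1). The main obstacle is the limit-stage bookkeeping in the second step: simultaneously upgrading mod-finite containments of both towers to literal ones while keeping all ideal-memberships intact requires an iterated use of the $P$-point property of $x_A$.
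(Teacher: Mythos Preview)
The paper does not give its own proof of this proposition; it is simply cited from \cite{FG}. So there is nothing to compare against, and I evaluate your argument on its own merits.

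Your overall strategy --- build a fixed-point-free involution $\sigma$ of $\mathbb N$ whose Stone extension swaps $A\setminus\{x_A\}$ with $B\setminus\{x_A\}$ and fixes $x_A$, then read off (1), (2), (3) --- is sound and efficient, and your quotient argument for (1) is clean. The gap is exactly where you flag it: the limit stage of the recursion. Your proposed repair, ``finite modifications of $a_\alpha$,'' does not work. At a limit $\alpha$ with cofinal sequence $\beta_n\nearrow\alpha$, the individual errors $L_{\beta_n}\setminus L_\alpha$ are each finite, but they are nested increasing and their union can perfectly well be infinite; no finite adjustment of $a_\alpha$ absorbs that, and invoking the $P$-point property on the cofinite sets $\mathbb N\setminus(L_{\beta_n}\setminus L_\alpha)$ gives nothing, since a pseudo-intersection of cofinite sets is automatic.

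The correct fix is simpler than what you propose and does not go through $a_\alpha$ at all: at a limit $\alpha$, set $L_\alpha=\bigcup_{\beta<\alpha}L_\beta$, $R_\alpha=\bigcup_{\beta<\alpha}R_\beta$, and $\sigma_\alpha=\bigcup_{\beta<\alpha}\sigma_\beta$. The only thing to check is that $L_\alpha\in\mathcal I_A$. This holds because for any countable family $\{c_n\}$ one has $\bigl(\bigcup_n c_n\bigr)^*=\overline{\bigcup_n c_n^*}$, and since each $L_\beta^*\subset A$ and $A$ is closed, $\bigl(\bigcup_\beta L_\beta\bigr)^*=\overline{\bigcup_\beta L_\beta^*}\subset A$. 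Symmetrically $R_\alpha\in\mathcal I_B$, and $L_\alpha\cap R_\alpha=\emptyset$ since the earlier $L_\beta$'s and $R_\gamma$'s are pairwise disjoint by the literal nesting already achieved. With this change your recursion goes through; at successor stages you interleave with a fixed cofinal tower in $\mathcal I_A$ (and in $\mathcal I_B$) to guarantee cofinality. One minor point: the residual set $\bigcap_\alpha a_\alpha$ is indeed finite, but if it has odd cardinality you cannot put a free involution on it; just absorb one point into an earlier $L_\alpha$ or $R_\alpha$.
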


As we have seen above, PFA implies that there are no almost clopen
subsets of $\mathbb N^*$, and of course, PFA also implies that
all autohomeomorphisms of $\mathbb N^*$ are trivial
\cite{ShStPFA}.  
However 
Velickovic utilized the simple $P$-point trick
 (motivating our definition of
 symmetric tie-point) in order 
to prove that this is not a consequence of Martin's Axiom (MA).

\begin{proposition}\cite{Vel93}
  It is consistent with MA and $\mathfrak c=\aleph_2$
  that there is an almost clopen set $A$ of $\mathbb N^*$ such that
   $x_A$ is a simple $P_{\aleph_2}$-point and,
  \begin{enumerate}
   \item  $x_A$ is a symmetric tie-point,
  \item there is an autohomeomorphism $f$ on $N^*$ such
    that $\{x\}$ is the only fixed point of $f$.
\end{enumerate}
\end{proposition}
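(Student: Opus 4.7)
The statement is a consistency claim combining MA and $\cee = \aleph_2$ with the existence of a highly structured simple $P_{\aleph_2}$-point, so I would argue in two conceptual steps. First, the consistency of MA $+ \cee = \aleph_2$ is standard, via a Solovay--Tennenbaum finite-support iteration of ccc posets of size at most $\aleph_1$ over a ground model of $\CH$. Second, and the substance of the proof, I would show that in any such model a suitable $x_A$ can be manufactured by transfinite recursion of length $\omega_2$, exploiting $\mathfrak{p} = \cee = \aleph_2$ to supply the pseudo-intersections needed at each stage.

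\textbf{The recursive construction.} Work in a model $V$ of MA $+ \cee = \aleph_2$. Fix an involution $\pi$ of $\mathbb N$ that swaps two infinite sets $N_0, N_1$ partitioning $\mathbb N$, set up so that $\pi$-invariant ultrafilters can exist (for instance, $\pi$ acting block-wise on a partition into pairs $\{2n, 2n+1\}$). Enumerate $\mathcal P(\mathbb N) = \{c_\alpha : \alpha < \omega_2\}$. Build, inductively for $\alpha < \omega_2$, a $\subset^*$-decreasing chain of $\pi$-invariant sets $\langle a_\alpha : \alpha < \omega_2\rangle$ together with increasing chains of orthogonal ideals $\mathcal I_\alpha \perp \mathcal I_\alpha^\perp$ swapped by $\pi$, such that every $c_\beta$ for $\beta < \alpha$ is classified as belonging to $\mathcal I_\alpha$, to $\mathcal I_\alpha^\perp$, or to the filter base for the intended ultrafilter $x_A$. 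At stage $\alpha$, use $\mathfrak{p} = \aleph_2$ to find a $\pi$-invariant pseudo-intersection of $\{a_\beta : \beta < \alpha\}$ with the splitting behaviour relative to $c_\alpha$ required to continue the construction; this becomes $a_\alpha$, and $c_\alpha$ is placed in whichever of the three categories fits.

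\textbf{Main obstacle.} The delicate points are twofold. First, one must maintain throughout that the three-way classification into $\mathcal I_\alpha$, $\mathcal I_\alpha^\perp$, and ``undecided'' is coherent with $\pi$-symmetry, so that the emerging $A$ admits $\pi^*$ as an involution with $x_A$ as a fixed point. Second, at each stage the pseudo-intersection $a_\alpha$ must be producible $\pi$-invariantly while still avoiding both of the growing orthogonal ideals; this is where the MA hypothesis is used substantively, applied to a suitable $\sigma$-centered poset of finite $\pi$-symmetric approximations to $a_\alpha$. Once the induction is complete, $x_A$ is the unique ultrafilter containing every $a_\alpha$, hence a simple $P_{\aleph_2}$-point, and $A$ is the closure of $\bigcup \{a^* : a \in \bigcup_\alpha \mathcal I_\alpha\}$, an almost clopen set with boundary $x_A$. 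Conclusions (1) and (2) then follow by transposing the argument of the preceding Fine--Gillman-style proposition from $\aleph_1$ to $\aleph_2$, with the autohomeomorphism $f$ built from $\pi$ via back-and-forth along the well-ordered base, having $\{x_A\}$ as its unique fixed point because any other ultrafilter $y$ omits some $a_\alpha$ and is thereby separated from $\pi^*(y)$ on a piece of $a_\alpha$.
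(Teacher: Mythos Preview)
The paper does not give its own proof of this proposition; it is quoted from \cite{Vel93}. Nonetheless, your proposal has a genuine gap. You plan to work inside an arbitrary model of MA $+$ $\mathfrak c=\aleph_2$ and carry out a recursion of length $\omega_2$ using $\mathfrak p=\aleph_2$. But PFA implies MA $+$ $\mathfrak c=\aleph_2$, and the paper itself records that PFA implies there are no almost clopen sets and no tie-points in $\mathbb N^*$. So in at least one model satisfying your hypotheses the object you wish to construct provably does not exist, and your recursion must break down. Concretely, the failure is at the classification step: a new $c_\alpha$ lying in the dual ideal of the partial filter need not be assignable to either $\mathcal I_\alpha$ or $\mathcal I_\alpha^\perp$ while preserving orthogonality (indeed such sets must ultimately be \emph{split} between the two sides, not placed wholesale into one), and under PFA no coherent splitting scheme can be carried to completion. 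No $\sigma$-centred poset invoked via MA will circumvent this, since MA holds under PFA as well.

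Velickovic's actual argument is a forcing construction, not a recursion inside a pre-existing model: a finite-support ccc iteration of length $\omega_2$ interleaves the posets needed for MA with posets that generically add the symmetric $\subset^*$-chain, arranged so that later iterands preserve what has already been built. Your two conceptual steps cannot be separated in the way you describe. There is also a smaller problem with your argument for item (2): a global involution $\pi$ of $\mathbb N$ induces a map $\pi^*$ on $\mathbb N^*$ whose fixed points are \emph{all} $\pi$-invariant ultrafilters, and there are many of these besides $x_A$. The standard route to (2), once one has a homeomorphism $h:A\to B$ fixing $x_A$, is to set $f\restriction A=h$ and $f\restriction B=h^{-1}$; this $f$ has fixed-point set exactly $A\cap B=\{x_A\}$.
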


Velickovic's result and approach was further generalized in
\cites{ShSt,Steprans}.
It is very interesting to know if an almost clopen subset of
$\mathbb N^*$ is itself homeomorphic to $\mathbb N^*$
(\cite{Farah, Just}).  This question also arose in the authors' work
on two-to-one images of $\mathbb N^*$ \cite{DSh2}.
Velickovic's method was slightly modified in \cite{DSh2} to produce a
complementary pair of almost clopen sets so that neither is
homeomorphic to $\mathbb N^*$, but it is not known 
if there is a symmetric tie-point $A\tie{x}B$ where $A$ is not a
copy of $\mathbb N^*$.

Our final mention of recent interest in almost clopen subsets of
$\mathbb N^*$ is in connection to the question \cite{LW,DR} of whether
the Banach space $\ell_\infty/c_0$ is necessarily primary. It was
noted
by Koszmider  
\cite{koszOpenII}*{p577} that a special almost clopen subset of
$\mathbb N^*$ could possibly resolve the problem. 
For a compact space $K$, we let  $C(K)$  denote the Banach space of
continuous real-valued functions on $K$ with the supremum norm. It is
well-known that $C(\mathbb N^*)$ is isomorphic (as a Banach space)
to $\ell_\infty/c_0$. Naturally if a space  $A$ 
 is homeomorphic to $\mathbb N^*$, then $C(A)$ is isomorphic to
$C(\mathbb N^*)$. 

\begin{proposition}\cite{koszOpenII}*{p577}
  Suppose that $A$ is an almost clopen subset of $ \mathbb N^*$
and that $B$ is its almost clopen complement.
If $C(\mathbb N^*)$ is not homeomorphic to either of
 $C(A)$ or $C(B)$, 
then $\ell_\infty/c_0$ is not primary.
\end{proposition}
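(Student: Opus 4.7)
The plan is to realize $\ell_\infty/c_0 \cong C(\mathbb N^*)$ as a Banach-space direct sum whose summands are expressed in terms of $C(A)$ and $C(B)$; the hypothesis will then force neither summand to be isomorphic to $\ell_\infty/c_0$, directly witnessing non-primariness.

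First I would consider the restriction map $T\colon C(\mathbb N^*) \to C(A) \oplus C(B)$ (with the $\ell_\infty$-norm on the target) defined by $T(f) = (f\trestriction A,\ f\trestriction B)$. Since $A \cup B = \mathbb N^*$ with both $A,B$ closed, $T$ is an isometric linear embedding; since $A \cap B = \{x_A\}$, its image is exactly the closed hyperplane $W = \{(g,h) : g(x_A) = h(x_A)\}$, the kernel of the bounded linear functional $\varphi(g,h) = g(x_A) - h(x_A)$. Setting $C^{0}_{B} := \{h \in C(B) : h(x_A) = 0\}$, the map $\Phi\colon W \to C(A) \oplus C^{0}_{B}$ defined by $\Phi(g,h) = (g,\ h - g(x_A)\mathbf 1_B)$ is a linear bicontinuous bijection (with inverse $(g,h_0) \mapsto (g,\ g(x_A)\mathbf 1_B + h_0)$), yielding
\[
  C(\mathbb N^*) \cong C(A) \oplus C^{0}_{B}.
\]

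With this decomposition, the first summand $C(A)$ is not isomorphic to $C(\mathbb N^*)$ by hypothesis. For the second summand, the splitting $C(B) \cong \mathbb R \cdot \mathbf 1_B \oplus C^{0}_{B}$ gives $C(B) \cong \mathbb R \oplus C^{0}_{B}$. If $C^{0}_{B} \cong C(\mathbb N^*)$ were to hold, then $C(B) \cong \mathbb R \oplus C(\mathbb N^*)$, and invoking the Banach-space fact $\mathbb R \oplus C(\mathbb N^*) \cong C(\mathbb N^*)$ would give $C(B) \cong C(\mathbb N^*)$, contradicting the hypothesis. Therefore $C^{0}_{B} \not\cong C(\mathbb N^*)$ as well, and the decomposition $C(\mathbb N^*) \cong C(A) \oplus C^{0}_{B}$ writes $\ell_\infty/c_0$ as a direct sum of two Banach spaces, neither of which is isomorphic to $\ell_\infty/c_0$; hence $\ell_\infty/c_0$ is not primary.

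The main obstacle is the absorption lemma $C(\mathbb N^*) \oplus \mathbb R \cong C(\mathbb N^*)$; the remainder is a routine translation of the tie-point data $\mathbb N^* = A \cup B$, $A \cap B = \{x_A\}$ into a codimension-one identification inside $C(A) \oplus C(B)$. The absorption fact itself is supported by the clopen partition $\mathbb N^* = a^* \sqcup b^*$ arising from any split $\mathbb N = a \sqcup b$ into two infinite pieces (each of $a^*, b^*$ being homeomorphic to $\mathbb N^*$), yielding $C(\mathbb N^*) \cong C(\mathbb N^*) \oplus C(\mathbb N^*)$, followed by a Pe\l czy\'nski-style argument absorbing the one-dimensional summand.
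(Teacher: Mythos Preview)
The paper does not supply a proof of this proposition; it is quoted as an observation from Koszmider's survey \cite{koszOpenII}. So there is nothing in the paper to compare your argument against, and your write-up is in fact the standard way one would justify Koszmider's remark: identify $C(\mathbb N^*)$ with the hyperplane $\{(g,h):g(x_A)=h(x_A)\}$ of $C(A)\oplus C(B)$, untwist to get $C(\mathbb N^*)\cong C(A)\oplus C^0_B$, and then use the hypothesis on $C(A)$ and $C(B)$ together with the hyperplane absorption $C(\mathbb N^*)\oplus\mathbb R\cong C(\mathbb N^*)$ to see that neither summand is isomorphic to $C(\mathbb N^*)$.

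One small point on the absorption lemma. Your sketch derives $C(\mathbb N^*)\cong C(\mathbb N^*)\oplus C(\mathbb N^*)$ from a clopen split and then invokes a ``Pe\l czy\'nski-style argument'' to absorb $\mathbb R$. Be careful here: it is not clear that $X\cong X\oplus X$ alone forces $X\cong X\oplus\mathbb R$ for an arbitrary Banach space $X$, and the usual Pe\l czy\'nski decomposition method needs a hypothesis like $X\cong(\sum X)_{c_0}$ or $X\cong(\sum X)_{\ell_p}$, not merely $X\cong X^2$. For $C(\mathbb N^*)$ specifically there is a cleaner route: any continuous surjection $\mathbb N^*\twoheadrightarrow\beta\mathbb N$ (which exists by Parovi\v{c}enko, or just by taking the closure of a strongly discrete countable set) gives an isometric copy of $\ell_\infty\cong C(\beta\mathbb N)$ inside $C(\mathbb N^*)$, and since $\ell_\infty$ is $1$-injective this copy is complemented. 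As $\ell_\infty\cong\ell_\infty\oplus\mathbb R$ by a shift, writing $C(\mathbb N^*)\cong \ell_\infty\oplus Z$ immediately yields
\[
C(\mathbb N^*)\oplus\mathbb R\;\cong\;\ell_\infty\oplus Z\oplus\mathbb R\;\cong\;(\ell_\infty\oplus\mathbb R)\oplus Z\;\cong\;\ell_\infty\oplus Z\;\cong\;C(\mathbb N^*).
\]
With this in place your argument is complete.
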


\section{Asymmetric tie-points}

In many of the applications mentioned in the introductory section, the
tie-points utilized were symmetric tie-points. In other applications,
for example the primariness of  $\ell_\infty/c_0$,
it may be useful to find examples where the witnessing sets $A,B$ for
a tie-point are quite different.
There are any number of local topological properties that $x_A$ may
enjoy as a point in $A$ that it may not share as a point in $B$.
We make the following definition in analogy
 with simple $P_\kappa$-points.

 \begin{definition}
   Let $\kappa$ be a regular cardinal. An almost clopen set $A$
is simple of type $\kappa$ if $\mathcal I_A$ has a $\subset^*$-cofinal 
$\subset^*$-increasing chain $\{a_\alpha : \alpha\in \kappa\}$
 of type $\kappa$. 
\end{definition}

If $\{a_\alpha : \alpha \in \kappa\}$ is 
strictly $\subset^*$-increasing and
$\subset^*$-cofinal in $\mathcal I_A$ for an almost clopen set
 $A$, then the family $\{ a_{\alpha+1} \setminus a_\alpha : \alpha\in
 \kappa\}$ can not be \textit{reaped\/}. A family $\mathcal A
 \subset [\mathbb N]^{\aleph_0}$ is reaped by a set $c\subset\mathbb
 N$ if $|a\setminus c| = |a\cap c|$ for all $a\in \mathcal A$. The
 reaping number $\mathfrak r$ is the minimum cardinal of a family
 that can not be reaped \cite{GoSh}.  For any infinite
set $a\subset \mathbb
 N$,
 let $\next(a,\cdot)$ be the function in $\mathbb N^{\mathbb N}$
 defined by $\next(a,k) = \min(a\setminus\{1,\ldots,k\})$.
 As usual, for $f, g\in \mathbb N^{\mathbb N}$, we say
 that $f<^* g$ if $\{ k : g(k) \leq f(k)\}$ is finite. 

 \begin{proposition}\cite{GoSh}
   If\label{2.2}
 $\mathcal A\subset[\mathbb N]^{\aleph_0}$ and
   if there is some $g\in \mathbb N^{\mathbb N}$ such
   that      $ \next(a,\cdot) <^* g$ for all $a\in \mathcal A$,
    then $\mathcal A$ can be reaped. In particular, $\mathfrak b \leq
    \mathfrak r$.  
 \end{proposition}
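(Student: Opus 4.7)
The plan is to exploit the uniform gap bound supplied by $g$: if every $a\in\mathcal A$ has $\next(a,\cdot)<^* g$, then every $a$ eventually meets every interval of the form $(k,g(k))$. Carving $\mathbb N$ into consecutive blocks whose right endpoints are dictated by $g$ and reaping by the union of every other block should then work.

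Concretely, I would first replace $g$ by $k\mapsto\max(g(k),k+1)$ so that $g(k)>k$ for all $k$; the hypothesis is preserved because $\next(a,k)>k$ holds for every infinite $a$. Then define $k_0=0$ and $k_{n+1}=g(k_n)$, and set
\[
c=\bigcup_{n\in\mathbb N}[k_{2n},k_{2n+1}).
\]
For each $a\in\mathcal A$, pick $K_a$ with $\next(a,k)<g(k)$ for $k\geq K_a$, and then $N_a$ with $k_{N_a}\geq K_a$. For every $n\geq N_a$ the element $\next(a,k_n)$ of $a$ lies strictly between $k_n$ and $g(k_n)=k_{n+1}$, so $a$ meets the block $[k_n,k_{n+1})$. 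Splitting into even and odd $n$ shows that $a\cap c$ and $a\setminus c$ each contain a point from cofinitely many blocks, hence both are infinite; thus $c$ reaps $\mathcal A$.

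For the inequality $\mathfrak b\leq\mathfrak r$, I would argue by contrapositive: if $\mathcal A\subset[\mathbb N]^{\aleph_0}$ has $|\mathcal A|<\mathfrak b$, then $\{\next(a,\cdot):a\in\mathcal A\}$ is a family of size less than $\mathfrak b$ in $(\mathbb N^{\mathbb N},<^*)$ and so is dominated by a single $g$; the first half then produces a set that reaps $\mathcal A$. Consequently every unreapable family has size at least $\mathfrak b$, i.e.\ $\mathfrak r\geq\mathfrak b$. The argument is a short bookkeeping trick with no real obstacle; the only point calling for a modicum of care is the indexing choice $k_{n+1}=g(k_n)$, so that the inequality $\next(a,k_n)<g(k_n)$ genuinely forces a point of $a$ into the $n$th block rather than merely into some later block.
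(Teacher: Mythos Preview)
Your argument is correct and is essentially the standard proof of this fact. The paper itself does not supply a proof of this proposition; it is simply cited from \cite{GoSh}. Your block-decomposition via $k_{n+1}=g(k_n)$ is exactly the usual device, and your care about the inequality $\next(a,k_n)<g(k_n)=k_{n+1}$ landing the point in the correct block (rather than a later one) is well placed. The deduction of $\mathfrak b\leq\mathfrak r$ by bounding $\{\next(a,\cdot):a\in\mathcal A\}$ whenever $|\mathcal A|<\mathfrak b$ is likewise the expected argument.
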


Again, if $\{a_\alpha : \alpha \in \kappa\}$ is  strictly
 $\subset^*$-increasing and
$\subset^*$-cofinal in $\mathcal I_A$ for an almost clopen set
 $A$, then the family $\{ a_{\alpha+1} \setminus a_\alpha : \alpha\in
 \kappa\}$ is an example of a \textit{converging\/}
 family of infinite sets.

 \begin{definition}
   Let $\mathcal A$ be a family of infinite subsets of $\mathbb N$.
   We say that $\mathcal A$ converges if there is an ultrafilter $x$
   on $\mathbb N$ such that for each $U\in x$, 
    the set $\{ a\in \mathcal A : a\setminus U \neq^* \emptyset\}$ has
    cardinality less than that of $\mathcal A$.

    We say that $\mathcal A$ is hereditarily unreapable if each
    reapable subfamily of $\mathcal A$ has cardinality less than
that of     $\mathcal A$. 
 \end{definition}

 An ultrafilter $x$ of $\mathbb N^*$ is said to be an almost
$P_\kappa$-point if each set of fewer than $\kappa$ many members of
$x$ has a pseudointersection (an infinite set mod finite contained in
each of them).   Certainly a converging family is hereditarily
unreapable
and converges to a point that is an almost $P_\kappa$-point where
$\kappa$ is the cardinality of the family.
Clearly
 the cardinality of
 any hereditarily unreapable family will have cofinality
 less than 
 the splitting number $\mathfrak s$.
 First we recall that
a family $\mathcal A\subset [\mathbb N]^{\aleph_0}$ is a
\textit{splitting\/} family if for all infinite $b\subset \mathbb N$,
there is an $a\in \mathcal A$ such that $| b\cap a| = |b\setminus
a|$. We say that $b$ is split by $a$.
The splitting number, $\mathfrak s$, is the least cardinality of
a splitting family and $\mathfrak s\leq \mathfrak d$ (\cite{GoSh}).  
Therefore if, for example,
 $\mathfrak s
 =\aleph_1$ and $\mathfrak r =\mathfrak c = \aleph_2$,  there will
 be no hereditarily unreapable family. If $\mathfrak s = \mathfrak c$,
 then there is a hereditarily unreapable family of cardinality
 $\mathfrak s$. In the Mathias model, of $\mathfrak s = \mathfrak c
 =\mathfrak b=\aleph_2$,
 there is no converging unreapable family because there is no
  almost $P_{\aleph_2}$-point. In the Goldstern-Shelah model
  \cite{GoSh} of $\mathfrak r =\mathfrak s =  \aleph_1 < \mathfrak u$, 
there is
  (easily   checked) no converging family of cardinality $\mathfrak
  r$. It might be interesting to determine if there is
  a hereditarily unreapable family in this model because that would
imply there was  a stronger preservation result for the posets used.

If there is a simple almost clopen set of type $\kappa$, are there
restrictions on the behavior of its almost clopen complement and can
there be simple almost clopen sets of different types (including the
complement)? These are the types of questions that stimulated this
study. The most compelling of these has been answered.

\begin{theorem}
  If $A$ is\label{nodble} a simple almost clopen set of type $\kappa$ and
  if the complementary almost clopen set $B$ is simple, then it also
  has type $\kappa$. 
\end{theorem}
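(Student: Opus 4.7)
The plan is to prove the contrapositive. WLOG $\kappa\le\lambda$, both regular; assume $\kappa<\lambda$. I will exhibit a $\subset^*$-cofinal subfamily of $\mathcal{I}_B$ of cardinality $\kappa$, contradicting $\cf(\mathcal{I}_B,\subset^*)=\lambda$. By symmetric reasoning, this forces $\kappa=\lambda$.

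The candidate family is $\{D_\alpha : \alpha<\kappa\}$ where $D_\alpha=\mathbb{N}\setminus\bigcup_{\gamma\ge\alpha}a_\gamma$. For any $\delta<\kappa$, picking $\gamma\ge\max(\alpha,\delta+1)$ gives $D_\alpha\cap a_\delta\subseteq a_\delta\setminus a_\gamma$, which is finite by mod-finite containment, so $D_\alpha\in\mathcal{I}_B$; moreover the family is $\subseteq$-increasing. To verify cofinality, let $b\in\mathcal{I}_B$: each $b\cap a_\alpha$ is finite, so $\alpha\mapsto b\cap a_\alpha$ maps $\kappa$ into the countable set $[b]^{<\omega}$. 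For each finite $F\subseteq b$, the preimage $\{\alpha : b\cap a_\alpha=F\}$ is either cofinal or bounded in $\kappa$; write $V$ for the set of cofinally attained traces and $W$ for the boundedly attained ones, both of which are countable. Since $\cf(\kappa)>\omega$, the supremum of the countably many bounded preimages is some $\eta<\kappa$, and hence $b\cap a_\alpha\in V$ for every $\alpha\ge\eta$.

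I would then conclude $b\subset^* D_\eta$ provided $\bigcup_{F\in V}F$ is finite. An element $n\in\bigcup_{F\in V}F$ is precisely one that lies in $a_\gamma$ for cofinally many $\gamma<\kappa$, i.e., a ``persistent'' element of $b$ with respect to the $a$-tower. The main obstacle, and the technical heart of the argument, is to rule out the possibility that some $b\in\mathcal{I}_B$ has infinitely many persistent elements. Such a configuration inside $b$ would, via the successor differences $b_{\beta+1}\setminus b_\beta$ of the $\lambda$-chain, produce an almost disjoint family within $\mathcal{I}_B$ whose $\next$-functions are dominated along $\{a_\alpha\}$, contradicting the unreapability that Proposition \ref{2.2} guarantees. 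A pressing-down argument coupled with the regularity of both $\kappa$ and $\lambda$ is then used to close the contradiction and deliver $\kappa=\lambda$.
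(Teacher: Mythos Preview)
Your reduction is correct up to the identification of ``persistent'' elements as the obstacle, but the argument you sketch for disposing of them does not work, and I do not see how to repair it along these lines.

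First, note that the family $\{D_\alpha\}$ depends on the particular representatives $a_\gamma$, not only on their $\subset^*$-classes. Given any infinite $b\in\mathcal I_B$ and any witnessing chain $\{a'_\alpha:\alpha<\kappa\}$ for $A$, set $a_\alpha = a'_\alpha\cup\{n_{g(\alpha)}\}$ where $b=\{n_k:k<\omega\}$ and $g:\kappa\to\omega$ hits every value on a cofinal set. The new chain still witnesses that $A$ is simple of type $\kappa$, each $b\cap a_\alpha$ is still finite, yet \emph{every} element of $b$ is now persistent. So the claim that $\bigcup_{F\in V}F$ is finite is false for some legitimate choices of the chain, and you have not indicated how to select a chain for which it holds for all $b\in\mathcal I_B$ simultaneously.

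The closing paragraph, invoking the successor differences $b_{\beta+1}\setminus b_\beta$, Proposition~\ref{2.2}, and a pressing-down argument, is too vague to constitute a proof. I cannot parse what ``$\next$-functions dominated along $\{a_\alpha\}$'' is meant to assert, nor how persistence of elements of a single $b$ is supposed to interact with the $\lambda$-chain in $\mathcal I_B$. No mechanism is visible by which the simplicity of $B$ forces the persistent set to be finite; in the example above the chain on the $A$-side can be chosen badly regardless of what $\{b_\beta\}$ looks like.

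The paper's argument takes a completely different and much shorter route. It shows (Lemmas~\ref{isbdd} and~\ref{isunbdd}, Corollary~\ref{corol}) that the type of any simple almost clopen set $C$ equals $\mathfrak b$ if the fixed $<^*$-unbounded family $\{f_\xi:\xi<\mathfrak b\}$ is $<_{x_C}$-bounded, and equals $\mathfrak d$ otherwise. Since $x_A=x_B$, both $A$ and $B$ fall in the same branch of this dichotomy, whence $\kappa=\lambda$. The point you are missing is that both types are already determined by the common ultrafilter $x_A$, so no direct cofinality comparison between $\mathcal I_A$ and $\mathcal I_B$ is needed.
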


Similarly, there is a restriction on what the type of a simple
almost clopen set can be  that is shared by simple $P_\kappa$-points
(as shown by Nyikos (unpublished) see \cite{BlassSh}). 

\begin{theorem} If $A$ is a\label{oneof}
 simple almost clopen set of type $\kappa$,
  then $\kappa$ is one of $\{\mathfrak b, \mathfrak d\}$.
\end{theorem}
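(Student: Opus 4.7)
My plan is to emulate the argument of Nyikos (cf.~\cite{BlassSh}) that a simple $P_\kappa$-point forces $\kappa \in \{\mathfrak{b},\mathfrak{d}\}$, but now using the enumeration functions of the \emph{complements} of a cofinal chain in $\mathcal I_A$ in place of a base for an ultrafilter.

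Fix a $\subset^*$-cofinal, $\subset^*$-increasing chain $\{a_\alpha : \alpha < \kappa\}$ in $\mathcal I_A$ (so $\kappa$ is regular). Write $b_\alpha := \omega \setminus a_\alpha$ and $d_\alpha := a_{\alpha+1} \setminus a_\alpha$, and let $e_\alpha \in \omega^\omega$ be the increasing enumeration of $b_\alpha$. Since $\{b_\alpha\}$ is $\subset^*$-decreasing, $\{e_\alpha : \alpha < \kappa\}$ is a $<^*$-increasing chain in $\omega^\omega$; meanwhile the $d_\alpha$ form an almost disjoint converging family, which is therefore unreapable.

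I would first establish $\kappa \geq \mathfrak{b}$. By unreapability of $\{d_\alpha\}$ together with Proposition~\ref{2.2}, the associated family $\{\next(d_\alpha,\cdot) : \alpha < \kappa\}$ cannot be $<^*$-dominated by any single $g \in \omega^\omega$. Thus it is a $<^*$-unbounded family of cardinality at most $\kappa$, so $\kappa \geq \mathfrak{b}$.

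Next I split on whether $\{e_\alpha\}$ is $<^*$-dominating in $\omega^\omega$. If it is, fix a dominating family $\mathcal D$ of minimum size $\mathfrak d$ and choose $\alpha_f < \kappa$ with $f \leq^* e_{\alpha_f}$ for each $f \in \mathcal D$. If $\gamma := \sup_f \alpha_f$ were strictly less than $\kappa$, then each $f \in \mathcal D$ would satisfy $f \leq^* e_\gamma$; but then no $f \in \mathcal D$ could satisfy $e_{\gamma+1} \leq^* f$, contradicting that $\mathcal D$ dominates $e_{\gamma+1}$. Hence $\{\alpha_f : f \in \mathcal D\}$ is cofinal in $\kappa$, giving $\mathfrak d = |\mathcal D| \geq \cf(\kappa) = \kappa$, and so $\kappa = \mathfrak d$.

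If instead $\{e_\alpha\}$ is not $<^*$-dominating, fix $g \in \omega^\omega$ with $g \not\leq^* e_\alpha$ for all $\alpha < \kappa$; I would then aim to conclude $\kappa = \mathfrak b$. Since $\kappa \geq \mathfrak b$ is already secured, this reduces to showing $\kappa \leq \mathfrak b$. The strategy is to combine the witness $g$ with the almost disjoint unreapable family $\{d_\alpha\}$: for each element of a $<^*$-unbounded family $\{f_\beta : \beta < \mathfrak b\}$ one selects a union of appropriately chosen $d_\alpha$'s compatible with $f_\beta$, producing a member of $\mathcal I_A$; controlling these unions against the non-dominance witnessed by $g$ should yield a cofinal $\subset^*$-increasing subchain in $\mathcal I_A$ of length $\mathfrak b$, forcing $\kappa \leq \mathfrak b$. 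The main obstacle is precisely this second case: coordinating the non-dominance of $g$ with the reaping-failure of $\{d_\alpha\}$ to produce that sub-chain of length $\mathfrak b$ is the delicate analogue of the hard step in Nyikos's original simple $P_\kappa$-point argument.
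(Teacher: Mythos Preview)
Your argument is incomplete: the non-dominating case is only a sketch (``should yield''), and you explicitly flag it as the main obstacle. More importantly, the analogy with Nyikos's argument breaks down at exactly this point. In the simple $P_\kappa$-point setting the decreasing chain $\{U_\alpha\}$ \emph{is} an ultrafilter base, so the enumeration functions $e_\alpha$ carry full information about the ultrafilter. Here the $b_\alpha = \omega\setminus a_\alpha$ are merely a $\supset^*$-chain converging to $x_A$; they are \emph{not} a base for $x_A$, since the complementary almost clopen side $B$ may be far richer than the tail filter of the $b_\alpha$'s. Your witness $g$ to non-dominance of $\{e_\alpha\}$ therefore tells you something about the chain but nothing directly about $x_A$, and it is unclear how the vague ``controlling these unions against $g$'' step could produce a cofinal $\mathfrak b$-subchain.

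The paper's route avoids this by retaining the ultrafilter structure: rather than splitting on whether $\{e_\alpha\}$ is $<^*$-dominating, it fixes a $<^*$-unbounded $\mathfrak b$-scale $\{f_\xi:\xi<\mathfrak b\}$ and splits on whether this scale is bounded in the \emph{linear} order $<_{x_A}$. If bounded by some $g$, the sets $U_\xi=\{n:f_\xi(n)<g(n)\}$ all lie in $x_A$; since the converging family $\{a_{\alpha+1}\setminus a_\alpha\}$ makes $x_A$ an almost $P_\kappa$-point, $\kappa>\mathfrak b$ would give a single $d_\alpha\subset^* U_\xi$ for all $\xi$, contradicting unboundedness of $\{f_\xi\restriction d_\alpha\}$. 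If unbounded (hence $<_{x_A}$-cofinal), one shows $\{f_\xi\circ\next(d_\alpha,\cdot)\}$ is $<^*$-dominating, giving $\mathfrak d\le\kappa$, and hereditary unreapability gives $\kappa\le\mathfrak d$. The key point is that the $<_{x_A}$-split produces sets in $x_A$, which is exactly what your $e_\alpha$-split cannot do.
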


Now that we understand the limits on the behavior of a complementary
pair of simple  almost clopen sets, we look to the properties of the
complement $B$ when it is not assumed to be simple.  The topological
properties of character and tightness of $x_B$ in $B$ are natural
cardinal invariants to examine. These correspond to natural properties
of $\mathcal I_B$ as well. An indexed subset $\{ y_\beta :
\beta<\lambda\}$ of a space $X$ is said to be  a free
sequence if the closure of each initial segment is disjoint from the
closure of its complementary final segment. A $\lambda$-sequence
$\{y_\beta : \beta<\lambda\}$ is
converging if there is a point $y$ such that every neighborhood of $y$
contains a final segment of
$\{y_\beta : \beta<\lambda\}$. A subset $D$ of $\mathbb N^*$ is said
to be \textit{strongly\/} discrete \cite{FZ,Rabus2}
if there is a family of pairwise disjoint
clopen  subsets of $\mathbb N^*$  each containing a single point of
$D$. 

\begin{theorem}  If $\kappa<\lambda$ are uncountable
regular  cardinals
  with $\mathfrak c\leq \lambda$, 
 then\label{forcing}
 there is a ccc forcing extension in which
  there is a simple almost clopen set $A$ of type $\kappa$
  such that the almost clopen complement $B$ contains 
a  free $\lambda$-sequence $\{y_\beta : \beta < \lambda\}$
  that converges to  $x_A$.
\end{theorem}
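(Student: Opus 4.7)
The plan is to construct the extension by a finite-support ccc iteration $\langle \Poset_\eta, \dot{\Qposet}_\eta : \eta < \lambda\rangle$ of length $\lambda$ over a ground model $V$ of GCH (so the final $\cee$ equals $\lambda$), simultaneously building three objects: a $\subset^*$-increasing chain $\{a_\alpha : \alpha<\kappa\}$ intended to be cofinal in $\mathcal I_A$, an orthogonal family $\{b_\beta : \beta<\lambda\}\subseteq \mathcal I_B$, and a $\subset^*$-decreasing sequence $\{e_\beta : \beta<\lambda\}$ that becomes a filter base for the tie-point $x_A$. A bookkeeping device handles all names for subsets of $\omega$, ensuring that in the final model the ideal generated by $\mathcal I_A\cup\mathcal I_B$ is maximal and $x_A$ is the unique complementary ultrafilter.

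The iteration has two types of stages. At a chain stage $\alpha<\kappa$, scheduled during the initial $\kappa$-segment, the iterand is a $\sigma$-centered poset whose conditions are pairs $(s,F)$, with $s\in 2^{<\omega}$ specifying an initial segment of $a_\alpha$ and $F$ a finite list of promises $a_\xi\subset^* a_\alpha$ and $a_\alpha\cap b_\gamma =^* \emptyset$ for earlier committed $\xi<\alpha$ and $\gamma$. At an ultrafilter stage $\beta$, fresh data is chosen: $e_\beta$ of the form $\omega\setminus(a_{\phi(\beta)}\cup b_{\psi(\beta)})$, with the $e$'s arranged $\subset^*$-decreasing and ultimately cofinal among sets of that form, together with a fresh $b_\beta\in\mathcal I_B$; then $y_\beta$ is added by a Mathias-style forcing relative to the filter base $\{b_\beta\}\cup\{e_\gamma:\gamma\le\beta\}$, with a side condition arranging $e_{\beta+1}\notin y_\beta$ so that the $e$'s actually separate $y_\beta$ from later terms.

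Given this setup the verification is brief. Each iterand is $\sigma$-centered, so the full iteration is ccc. The chain $\{a_\alpha\}$ is $\subset^*$-increasing by construction and $\subset^*$-cofinal in $\mathcal I_A$ by bookkeeping, making $A$ simple of type $\kappa$. Each $y_\beta$ lies in $B\setminus\{x_A\}$ because $b_\beta\in y_\beta\cap\mathcal I_B$. Convergence $y_\beta\to x_A$ is immediate: every basic neighborhood $c^*$ of $x_A$ contains some $e_\gamma^*$, and $e_\gamma\in y_\beta$ whenever $\beta\ge\gamma$. For freeness, the separator at $\gamma$ is $e_\gamma$: the initial segment $\{y_\beta:\beta<\gamma\}$ lies in $(\omega\setminus e_\gamma)^*$ (since $e_\gamma\subset^* e_{\beta+1}\notin y_\beta$), while the final segment $\{y_\beta:\gamma\le\beta\}$ lies in $e_\gamma^*$ (since $e_\gamma\supset^* e_\beta\in y_\beta$), giving disjoint closures.

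The main obstacle is proving that the $\kappa$-chain $\{a_\alpha\}$ remains $\subset^*$-cofinal in $\mathcal I_A$ in the full length-$\lambda$ extension: no subset $a\subseteq\omega$ added at some stage $\eta\in[\kappa,\lambda)$ should lie in $\mathcal I_A$ (be almost disjoint from every $b_\beta$) without being $\subset^*$-covered by some $a_\alpha$, or else the cofinality of $\mathcal I_A$ jumps to $\geq\lambda$. This requires that the Mathias-style iterands and the bookkeeping be tuned so that any such candidate $a$ is absorbed into the chain via a later promise, essentially a reflection argument showing every $\Poset_\lambda$-name for such an $a$ reduces to a $\Poset_\kappa$-name. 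Consistency with Theorem~\ref{oneof}, which forces $\kappa\in\{\mathfrak b,\mathfrak d\}$, is handled by arranging $\mathfrak b=\kappa$ in the extension: the $\kappa$-many initial chain stages provide an unbounded family of size $\kappa$, and the $\sigma$-centered later iterands prevent new dominating families of cardinality $<\kappa$.
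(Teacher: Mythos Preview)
Your proposal has two genuine gaps, and both are precisely where the paper invests its real work.

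\textbf{The cofinality obstacle is not solved.} You correctly flag as ``the main obstacle'' that the $\kappa$-chain $\{a_\alpha\}$ must remain $\subset^*$-cofinal in $\mathcal I_A$ after the full length-$\lambda$ iteration, but the sentence offered in lieu of a proof---``essentially a reflection argument showing every $\Poset_\lambda$-name for such an $a$ reduces to a $\Poset_\kappa$-name''---is not true in your setup. Your Mathias-style ultrafilter stages at $\eta\in[\kappa,\lambda)$ add new reals, so there \emph{are} subsets of $\omega$ that are not $\Poset_\kappa$-names, and nothing you have said prevents one of them from landing in $\mathcal I_A$ above every $a_\alpha$. The phrase ``absorbed into the chain via a later promise'' cannot work either, since by your own scheduling all chain stages occur in the initial $\kappa$-segment and there are no later ones. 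The paper sidesteps this entirely by reversing the order: it first forces (with a ccc poset of size $\lambda$) the $\subset^*$-increasing $\lambda$-chain $\{b_\zeta:\zeta<\lambda\}$ that will live in $\mathcal I_B$, and only \emph{then} runs a length-$\kappa$ iteration to build $\{a_\alpha:\alpha<\kappa\}$, choosing at each successor stage a \emph{maximal} ideal $\dot{\mathcal J}'_{\alpha+1}$ extending the data so far. Because this second iteration has length $\kappa$ and is ccc, every subset of $\omega$ in the final model is already a $\Poset_\beta$-name for some $\beta<\kappa$, hence is decided by some $\dot{\mathcal J}'_{\beta+1}$; this is what makes $x_A$ an ultrafilter and $\{a_\alpha\}$ cofinal in $\mathcal I_A$. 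The free $\lambda$-sequence $\{y_\beta\}$ is then simply \emph{chosen} in the final model from the $b_\zeta$'s, not added by forcing.

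\textbf{The $\sigma$-centered claim is false.} Your chain-stage poset, with promises both of the form $a_\xi\subset^* a_\alpha$ and $a_\alpha\cap b_\gamma=^*\emptyset$, is essentially the paper's poset $Q(\mathcal A;\mathcal I)$. Two conditions with the same stem $s$ can be incompatible: if one promises $a_\xi\setminus n_1\subset a_\alpha$ and the other promises $a_\alpha\cap(b_\gamma\setminus n_2)=\emptyset$, and $(a_\xi\setminus n_1)\cap(b_\gamma\setminus n_2)\neq\emptyset$ (which can happen since $a_\xi\cap b_\gamma$ is only \emph{almost} empty), there is no common extension. So the iterands are not $\sigma$-centered, and the ccc is genuinely at issue at stages $\alpha$ with $\cf(\alpha)=\omega_1$. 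This is the main technical content of the paper: one must restrict the ideals $\dot{\mathcal I}_\beta$ at limits of countable cofinality (condition~(8) in the definition of the class~$\mathfrak A$) and then prove a delicate ``pre-ccc'' lemma (Lemma~\ref{willbepreccc}) via a $\Delta$-system and pressing-down argument to show that $Q(\{\dot a_\beta:\beta<\alpha\};\dot{\mathcal I})$ is ccc when $\cf(\alpha)=\omega_1$. Your outline contains no analogue of this.
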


We prove these theorems in the next section. We finish this section
by formulating some open problems about almost clopen sets and
possible asymmetries.

\begin{question}
  Can there exist simple almost clopen sets of different types?
\end{question}

\begin{question} Can there exist a simple almost clopen set of
  type greater than $\mathfrak b$?
\end{question}

\begin{question}
  If there is a simple almost clopen set of type $\kappa$
  is there a point of $\mathbb N^*$ of character $\kappa$?
  Is there a simple $P_\kappa$-point?
 \end{question}

The next question is simply a special case of the previous. 

 \begin{question}
   Is a simple almost clopen set of type $\aleph_1$ necessarily
   homeomorphic to $\mathbb N^*$?  
\end{question}

\begin{question}
   If $A$ is a simple almost clopen set of type $\kappa$, is
  there a simple almost clopen set $B'$ contained in the
  almost clopen complement $B$ of $A$ such that $x_A\in B'$?
  Is there a family of $\kappa$-many members of $\mathcal I_B$
  that converges to $x_A$?
\end{question}

Let us note that Theorem \ref{3.6} is pertinent to this
question.

\section{Proofs}

Our analysis of simple almost clopen sets depends on the connection
between the type of the clopen set and the ultrafilter ordering of
functions from $\mathbb N$ to $\mathbb N$. For an ultrafilter $x$ on
$\mathbb N$ the ordering $<_x$  is defined on $\mathbb N^{\mathbb N}$
by the condition that
$f<_x g$ if $\{ n\in \mathbb N : f(n) < g(n) \}\in x$. Since $x$ is an
ultrafilter, a set $F\subset \mathbb N^{\mathbb N}$ is cofinal in
$(\mathbb N^{\mathbb N}, <_x)$ if it is not bounded. Of course a
subset of $\mathbb N^{\mathbb N}$ that is unbounded with respect to
the $<_x$-ordering is also unbounded with respect to the mod finite
ordering $<^*$.

Fix a $<^*$-unbounded
 family $\{ f_\xi : \xi < \mathfrak b\}\subset \mathbb N^{\mathbb
   N}$ such that each $f_\xi$ is strictly increasing and such
 that $f_\eta<^* f_\xi$ for all $\eta < \xi <\mathfrak b$.
 The following well-known fact will be useful.

 \begin{proposition}
   For each infinite $b\subset \mathbb N$ and\label{nob}
   each unbounded
   $\Gamma\subset \mathfrak b$, 
 the family
     $\{ f_\xi \trestriction b : \xi \in \Gamma\}$ is $<^*$-unbounded
     in $\mathbb N^b$.
   \end{proposition}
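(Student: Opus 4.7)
The proof proceeds by contradiction using the strict monotonicity of the $f_\xi$'s to transfer a hypothetical bound on $b$ to a bound on all of $\mathbb N$.

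First I would observe the following easy preliminary: if $\Gamma \subset \mathfrak b$ is unbounded, then $\{f_\xi : \xi \in \Gamma\}$ is itself $<^*$-unbounded in $\mathbb N^{\mathbb N}$. Indeed, if some $h$ satisfied $f_\xi <^* h$ for every $\xi \in \Gamma$, then for any $\eta < \mathfrak b$ we could pick $\xi \in \Gamma$ with $\eta < \xi$, whence $f_\eta <^* f_\xi <^* h$, contradicting the unboundedness of the full chain $\{f_\xi : \xi < \mathfrak b\}$.

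Now suppose toward a contradiction that some $g \in \mathbb N^b$ satisfies $f_\xi \trestriction b <^* g$ for all $\xi \in \Gamma$. I would extend $g$ to a function $\tilde g \in \mathbb N^{\mathbb N}$ by exploiting strict monotonicity: for each $k \in \mathbb N$, let $n(k) = \min(b \setminus \{0,1,\ldots,k-1\})$ be the least element of $b$ that is $\geq k$, and set $\tilde g(k) = g(n(k))$. Since each $f_\xi$ is strictly increasing and $k \leq n(k)$, we have $f_\xi(k) \leq f_\xi(n(k))$ for every $k$.

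Fix $\xi \in \Gamma$. By hypothesis there is a finite $F_\xi \subset b$ such that $f_\xi(n) < g(n)$ for every $n \in b \setminus F_\xi$. Take any $k$ larger than $\max F_\xi$; then $n(k) \in b \setminus F_\xi$, so
\[
f_\xi(k) \leq f_\xi(n(k)) < g(n(k)) = \tilde g(k).
\]
Thus $f_\xi <^* \tilde g$ for every $\xi \in \Gamma$, contradicting the preliminary observation. The only genuinely substantive step is the construction of $\tilde g$ via the $n(k)$ gadget; everything else is routine. No serious obstacle is anticipated.
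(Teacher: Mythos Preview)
Your proof is correct and follows essentially the same approach as the paper: both argue that a hypothetical bound $g\in\mathbb N^b$ yields a bound in $\mathbb N^{\mathbb N}$ via the composition $g\circ\next(b,\cdot)$ (your $\tilde g$ is exactly this, up to a one-step shift in the definition of $n(k)$), and then use strict monotonicity of $f_\xi$ to derive a contradiction with the unboundedness of $\{f_\xi:\xi\in\Gamma\}$. The paper phrases it as a direct argument rather than by contradiction, but the content is the same.
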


   \begin{proof}
For each $\eta < \mathfrak b$, there is a $\xi\in \Gamma\setminus
\eta$ such that $f_\eta <^* f_\xi$, hence
$\{ f_\xi : \xi \in \Gamma\}$ is $<^*$-unbounded. If $g\in \mathbb
N^b$, then $g\circ \next(b,\cdot)\in \mathbb N^{\mathbb N}$.
So there is a $\xi\in \Gamma$ such that 
$f_\xi \not<^* g\circ \next(b,\cdot )$. 
Since $f_\xi$ is strictly increasing, $f_\xi\trestriction b\not<^* g$. 
\end{proof}

\begin{lemma} If a family $\mathcal A\subset [\mathbb N]^{\aleph_0}$
  converges to\label{isbdd}
 an ultrafilter $x$ and if $\{ f_\xi : \xi\in \mathfrak
  b\}$ is bounded mod $<_x$, then $\mathcal A$ has cardinality
  $\mathfrak b$.
\end{lemma}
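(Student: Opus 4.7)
The plan is to show $|\mathcal A|\le \mathfrak b$ by deriving a contradiction from the assumption that $|\mathcal A|$ is a regular cardinal strictly greater than $\mathfrak b$. In the applications of this lemma the family $\mathcal A$ arises as a difference family $\{a_{\alpha+1}\setminus a_\alpha : \alpha<\kappa\}$ coming from a $\subseteq^*$-chain of regular length $\kappa$, so regularity of $|\mathcal A|$ is automatic and the upper bound combined with the structure of $\mathcal A$ will deliver the stated equality.

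The execution unfolds in three small moves. First, extract a common witness: by hypothesis there is $g\in\mathbb N^{\mathbb N}$ with $f_\xi<_x g$ for every $\xi<\mathfrak b$, equivalently $U_\xi:=\{n:f_\xi(n)<g(n)\}\in x$ for every $\xi<\mathfrak b$. Second, size the bad sets: applying convergence of $\mathcal A$ to $x$ at the set $U_\xi$ shows that $\mathcal A_\xi:=\{a\in\mathcal A : a\setminus U_\xi\neq^*\emptyset\}$ has cardinality strictly less than $|\mathcal A|$, for every $\xi<\mathfrak b$. Third, find a persistent witness: since $|\mathcal A|$ is regular and exceeds $\mathfrak b$, the union $\bigcup_{\xi<\mathfrak b}\mathcal A_\xi$ has cardinality at most $\mathfrak b\cdot \sup_{\xi<\mathfrak b}|\mathcal A_\xi|<|\mathcal A|$, so we may pick $a\in\mathcal A\setminus\bigcup_{\xi<\mathfrak b}\mathcal A_\xi$. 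By construction $a\subseteq^* U_\xi$, and hence $f_\xi\trestriction a<^* g\trestriction a$ in $\mathbb N^a$, for every $\xi<\mathfrak b$. This directly contradicts Proposition~\ref{nob} applied with $b=a$ and $\Gamma=\mathfrak b$, which guarantees that $\{f_\xi\trestriction a:\xi<\mathfrak b\}$ is $<^*$-unbounded in $\mathbb N^a$.

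The only genuine obstacle is the possibility that $|\mathcal A|$ is singular with $\cf(|\mathcal A|)\le\mathfrak b$, in which case the one-line union bound in the third move fails because $\sup_\xi|\mathcal A_\xi|$ can reach $|\mathcal A|$. The repair is a Fubini-style argument: each $\mathcal A_\xi$ is individually of size less than $|\mathcal A|$, so one locates an $a\in\mathcal A$ whose set of \emph{bad} indices $\{\xi<\mathfrak b:a\in\mathcal A_\xi\}$ is bounded below $\mathfrak b$, and then invokes Proposition~\ref{nob} with $\Gamma$ equal to a cofinal tail of $\mathfrak b$; the conclusion is the same. Since the converging families of interest in this paper have regular cardinality, this complication does not actually arise in the intended applications.
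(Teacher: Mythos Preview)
Your core argument is the paper's: pick $g$ with $f_\xi <_x g$ for all $\xi$, set $U_\xi=\{n:f_\xi(n)<g(n)\}\in x$, use convergence to find $a\in\mathcal A$ with $a\subset^* U_\xi$ for every $\xi$, and contradict Proposition~\ref{nob}. Two points, however, need attention.

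First, you never establish the lower bound $\mathfrak b\le|\mathcal A|$; you only argue $|\mathcal A|\le\mathfrak b$ and defer equality to ``the structure of $\mathcal A$'' in the intended application. The lemma as stated concerns an arbitrary converging family, and the paper supplies the missing half in one line: a converging family cannot be reaped, so Proposition~\ref{2.2} forces $\mathfrak b\le|\mathcal A|$. You should include this.

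Second, your Fubini repair for the singular case does not work. Because $f_\xi<^*f_\eta$ for $\xi<\eta$, one has $U_\eta\subset^* U_\xi$, and a short computation shows $\mathcal A_\xi\subseteq\mathcal A_\eta$ outright. Hence for each $a$ the bad-index set $\{\xi<\mathfrak b:a\in\mathcal A_\xi\}$ is a final segment of $\mathfrak b$; asking that it be bounded is precisely asking that it be empty, i.e.\ that $a\notin\bigcup_\xi\mathcal A_\xi$, which is the very statement you were trying to salvage. The paper's proof also glides past the singular case (its parenthetical ``$x$ is an almost $P_{\mathfrak b^+}$-point'' leans on an earlier unproved remark), so you are in good company; and as you correctly note, only the regular case is used in Corollary~\ref{corol} and the theorems that follow.
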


\begin{proof}
Choose $g\in \mathbb N^{\mathbb N}$ so that $f_\xi <_x g$ for all
$\xi<\mathfrak b$. Since $\mathcal A$ can not be reaped,
 Proposition \ref{2.2} implies that $\mathfrak b\leq |\mathcal A|$.
For each $\xi$, let $U_\xi = \{ n\in \mathbb N : f_\xi(n)<g(n)\}\in
x$. If $\mathfrak b < |\mathcal A|$, then there is a $b\in \mathcal A$
such that $b\subset^* U_\xi$ for all $\xi< \mathfrak b$ (i.e. $x$ is
an almost $P_{\mathfrak b^+}$-point). However 
we would then have that $f_\xi\trestriction b <^* g\trestriction b$ 
for all $\xi<\mathfrak b$, and
by Proposition \ref{nob}, 
there is no such set $b$. This completes the proof.
\end{proof}

\begin{lemma}
 If a family $\mathcal A\subset [\mathbb N]^{\aleph_0}$
  converges\label{isunbdd}
 to an ultrafilter $x$ and if $\{ f_\xi : \xi\in \mathfrak
  b\}$ is unbounded mod $<_x$, then if
$\mathcal A$ has regular cardinality, that cardinal is
equal to  $\mathfrak d$.
\end{lemma}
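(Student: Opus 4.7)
The plan is to sandwich $|\mathcal A|$ between $\mathfrak d$ and $\mathfrak d$. The $<_x$-unboundedness hypothesis drives the lower bound $\mathfrak d\leq|\mathcal A|$, while convergence (hence hereditary unreapability) together with the regularity of $|\mathcal A|$ drives the upper bound $|\mathcal A|\leq\mathfrak d$.

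For the lower bound I will manufacture a $<^*$-dominating family of size $|\mathcal A|$ from $\mathcal A$ and $\{f_\xi\}$. Given any nondecreasing $g\in\mathbb N^\mathbb N$, the $<_x$-unboundedness of $\{f_\xi\}$ supplies some $\xi<\mathfrak b$ with $X:=\{n:f_\xi(n)\geq g(n)\}\in x$. Since $\mathcal A$ converges to $x$, fewer than $|\mathcal A|$ many $a\in\mathcal A$ fail $a\subset^* X$, so I can pick one $a\in\mathcal A$ with $a\subset^* X$, and set
\[
h_{a,\xi}(k):=f_\xi(\next(a,k)).
\]
For almost every $k$, $\next(a,k)\in a\cap X$, so $h_{a,\xi}(k)=f_\xi(\next(a,k))\geq g(\next(a,k))\geq g(k)$ using $\next(a,k)\geq k$ and monotonicity of $g$. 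Hence $\{h_{a,\xi}:a\in\mathcal A,\ \xi<\mathfrak b\}$ is $<^*$-dominating, of size at most $|\mathcal A|\cdot\mathfrak b=|\mathcal A|$ (using $\mathfrak b\leq|\mathcal A|$ from Proposition \ref{2.2} applied to the unreapable family $\mathcal A$). So $\mathfrak d\leq|\mathcal A|$.

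For the upper bound I argue directly from hereditary unreapability. Fix a $<^*$-dominating family $\{d_\alpha:\alpha<\mathfrak d\}$, and for each $a\in\mathcal A$ pick $\alpha(a)<\mathfrak d$ with $\next(a,\cdot)<^*d_{\alpha(a)}$. Set $\mathcal A_\alpha:=\{a\in\mathcal A:\alpha(a)=\alpha\}$, so $\mathcal A=\bigcup_{\alpha<\mathfrak d}\mathcal A_\alpha$. Proposition \ref{2.2} (with the single bounding function $d_\alpha$) shows every $\mathcal A_\alpha$ is reapable, hence of size strictly less than $|\mathcal A|$ by hereditary unreapability of $\mathcal A$. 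If $|\mathcal A|>\mathfrak d$ were regular, the union of $\mathfrak d$ many sets each of size $<|\mathcal A|$ would have size strictly less than $|\mathcal A|$, a contradiction. Therefore $|\mathcal A|\leq\mathfrak d$.

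Combining the two bounds gives $|\mathcal A|=\mathfrak d$. The main technical step is the dominating-family construction, which is where $<_x$-unboundedness and convergence interact nontrivially; once one has the idea of post-composing $f_\xi$ with $\next(a,\cdot)$, the inequality $h_{a,\xi}(k)\geq g(k)$ drops out. The upper-bound argument is purely combinatorial and does not use the $<_x$-unboundedness hypothesis at all—that hypothesis is precisely what raises the lower bound from $\mathfrak b$ (as in Lemma \ref{isbdd}) up to $\mathfrak d$.
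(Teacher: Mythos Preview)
Your proof is correct and follows essentially the same route as the paper's: you build the dominating family $\{f_\xi\circ\next(a,\cdot):\xi<\mathfrak b,\ a\in\mathcal A\}$ to get $\mathfrak d\leq|\mathcal A|$, and you use a dominating family together with Proposition~\ref{2.2} and hereditary unreapability to bound each fiber below $|\mathcal A|$, yielding $|\mathcal A|\leq\mathfrak d$ by regularity. The only cosmetic differences are that the paper first observes $<_x$-unbounded implies $<_x$-cofinal (since $x$ is an ultrafilter) and works with strictly increasing $g$, whereas you work directly with nondecreasing $g$ and the set $\{n:f_\xi(n)\geq g(n)\}$; both versions go through.
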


\begin{proof}
Since we are assuming that $\{ f_\xi : \xi \in \mathfrak b\}$ is
$<_x$-unbounded, it is actually $<_x$-cofinal. 
We check that
the family $\{ f_\xi \circ \next(a,\cdot ) : \xi< \mathfrak b, a\in
\mathcal A\}$ is a $<^*$-dominating family. Take any strictly
increasing 
$g\in \mathbb
N^{\mathbb N}$ and choose $\xi<\mathfrak b$ such that
  $U= \{ n : g(n)<f_\xi(n) \}\in x$. Since $\mathcal A$ converges to
  $x$,
  there is an $a\in \mathcal A$ such that $a\subset^* U$.
Since $g$ is strictly increasing,
it is clear that $g < f_\xi\circ \next(U,\cdot )<^* f_\xi\circ
\next(a,\cdot)$.  Again, since $\mathcal A$ can not be reaped,
we have  $\mathfrak b\leq |\mathcal A|$ and  this implies
that
$\mathfrak d \leq |\mathcal A|$. Assume
that $\{ g_\beta : \beta < \mathfrak d\}\subset\mathbb N^{\mathbb N}$
 is a $<^*$-dominating family. For each $a\in
 \mathcal A$, there is a $\beta_a<\mathfrak d$ such that
 $\next(a,\cdot) <^* g_{\beta_a}$.  Now
since $\mathcal A$ is hereditarily  unreapable,
 Proposition \ref{2.2} implies that if $\mathcal A$ 
  has regular cardinality,
 the mapping $a\mapsto \beta_a$ is ${<}|\mathcal A|$-to-1.
This implies that $|\mathcal A|\leq \mathfrak d$.
\end{proof}

\begin{corollary}
Suppose that  $A$ is a simple\label{corol}
 almost clopen subset of $\mathbb N^*$ of type $\kappa$. 
    If  $\{ f_\xi : \xi<\mathfrak b\}$ is
    $<_{x_A}$-bounded, then
    $\kappa=\mathfrak b$; otherwise $\kappa = \mathfrak d$.
  \end{corollary}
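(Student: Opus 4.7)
The plan is to deduce this corollary directly from Lemmas \ref{isbdd} and \ref{isunbdd} by feeding them a canonical converging family extracted from the defining chain of the simple almost clopen set $A$.

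Fix a $\subset^*$-increasing, $\subset^*$-cofinal chain $\{a_\alpha : \alpha<\kappa\}$ in $\mathcal I_A$ witnessing that $A$ is simple of type $\kappa$, and let $c_\alpha = a_{\alpha+1}\setminus a_\alpha$. Set $\mathcal A = \{c_\alpha : \alpha<\kappa\}$. Since the chain is strictly $\subset^*$-increasing, each $c_\alpha$ is infinite, and since $\alpha<\beta$ forces $a_{\alpha+1}\subset^* a_\beta$, the family $\mathcal A$ is almost disjoint. In particular $|\mathcal A|=\kappa$, and $\kappa$ is regular by the definition of ``type''.

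The main step is to check that $\mathcal A$ converges to $x_A$. By the lemma preceding Definition 2.1, the ideal generated by $\mathcal I_A\cup\mathcal I_B$ has $x_A$ as its unique dual ultrafilter; thus it is a maximal ideal, and for any $U\in x_A$ we may write $\mathbb N\setminus U =^* a\cup b$ with $a\in\mathcal I_A$ and $b\in\mathcal I_B$. By cofinality of the chain in $\mathcal I_A$ there is some $\alpha_0<\kappa$ with $a\subset^* a_{\alpha_0}$. For any $\alpha\geq\alpha_0$:
\begin{itemize}
\item $c_\alpha\cap a \subset^* c_\alpha\cap a_\alpha = \emptyset$, since $a\subset^* a_{\alpha_0}\subset^* a_\alpha$ and $c_\alpha$ is disjoint from $a_\alpha$;
\item $c_\alpha\cap b =^* \emptyset$, since $c_\alpha\subset a_{\alpha+1}\in\mathcal I_A$, $b\in\mathcal I_B=\mathcal I_A^\perp$.
\end{itemize}
Hence $c_\alpha\subset^* U$ for all $\alpha\geq\alpha_0$, so $\{\alpha<\kappa : c_\alpha\setminus U\neq^*\emptyset\}\subseteq\alpha_0$, which has cardinality $<\kappa$ by regularity of $\kappa$. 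This is exactly convergence of $\mathcal A$ to $x_A$.

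With convergence in hand, the dichotomy is routine: if $\{f_\xi : \xi<\mathfrak b\}$ is $<_{x_A}$-bounded, Lemma \ref{isbdd} applied to $(\mathcal A,x_A)$ yields $\kappa=|\mathcal A|=\mathfrak b$; if instead $\{f_\xi\}$ is $<_{x_A}$-unbounded, then because $|\mathcal A|=\kappa$ is regular, Lemma \ref{isunbdd} yields $\kappa=\mathfrak d$. The only nontrivial piece is the convergence verification above; everything else is structural bookkeeping and appeals to the prior lemmas.
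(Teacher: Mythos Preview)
Your proof is correct and follows exactly the same route as the paper: form $\mathcal A=\{a_{\alpha+1}\setminus a_\alpha:\alpha<\kappa\}$, note it converges to $x_A$, and invoke Lemmas~\ref{isbdd} and~\ref{isunbdd}. The only difference is that you actually verify the convergence of $\mathcal A$ to $x_A$ (and that $|\mathcal A|=\kappa$), whereas the paper simply asserts this fact in the discussion preceding the definition of converging families and cites it in the proof.
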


  \begin{proof}
    Let $\{ a_\alpha : \alpha\in \kappa\}$ be the family contained in
    $\mathcal I_A$ witnessing that $A$ has type $\kappa$.
    Set
     $\mathcal A$ equal to
the family $ \{ a_{\alpha+1}\setminus a_\alpha : \alpha \in \kappa\}$
which converges to $x_A$. 
    If $\{ f_\xi : \xi<\mathfrak b\}$ is $<_{x_A}$-bounded,
    then by Lemma \ref{isbdd}, $\kappa = \mathfrak b$.
    Otherwise, since $\kappa$ is a regular cardinal, we have
    by Lemma \ref{isunbdd}, $\kappa = \mathfrak d$.    
\end{proof}
  
  \bgroup

  \def\proofname{Proof of Theorem \ref{nodble}.~}

  \begin{proof}
    Assume that $A$ and its
    complementary almost clopen set $B$ are both simple
    and let  $x=x_A$. 
If $\{ f_\xi : \xi< \mathfrak b\}$ is $<_x$-bounded
    then, by Corollary \ref{corol}
    they both have type $\mathfrak b$;  otherwise
    they both have type $\mathfrak d$.
  \end{proof}

   \def\proofname{Proof of Theorem \ref{oneof}.~}

   \begin{proof}
     Immediate  from Corollary
     \ref{corol}.
   \end{proof}

   \egroup

   We can improve Theorem \ref{nodble}. 

\begin{proposition} There is no\label{split}
 almost $P_{\mathfrak s^+}$-point in
  $\mathbb N^*$. 
\end{proposition}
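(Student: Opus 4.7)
The plan is to start from a splitting family and transport it into the given ultrafilter using the ultrafilter property, then show the resulting subfamily has no pseudointersection.

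Fix an arbitrary ultrafilter $x$ on $\mathbb{N}$ and a splitting family $\{ s_\alpha : \alpha < \mathfrak{s}\}$ of cardinality $\mathfrak{s}$. For each $\alpha$, exactly one of $s_\alpha$ and $\mathbb{N}\setminus s_\alpha$ belongs to $x$; call this set $U_\alpha$. The family $\mathcal U = \{ U_\alpha : \alpha<\mathfrak{s}\}$ is a subfamily of $x$ of cardinality at most $\mathfrak{s}$, hence of cardinality strictly less than $\mathfrak{s}^+$. To conclude that $x$ is not an almost $P_{\mathfrak{s}^+}$-point, it suffices to show that $\mathcal U$ has no pseudointersection.

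Suppose toward a contradiction that some infinite $b\subset \mathbb{N}$ satisfies $b\subset^* U_\alpha$ for every $\alpha<\mathfrak{s}$. Because $\{ s_\alpha : \alpha<\mathfrak{s}\}$ is splitting, there exists $\alpha$ with $|b\cap s_\alpha| = |b\setminus s_\alpha| = \aleph_0$. If $U_\alpha = s_\alpha$, then $b\setminus U_\alpha = b\setminus s_\alpha$ is infinite, contradicting $b\subset^* U_\alpha$; if instead $U_\alpha = \mathbb{N}\setminus s_\alpha$, then $b\setminus U_\alpha = b\cap s_\alpha$ is infinite, again a contradiction. Hence no such $b$ exists, and $x$ admits a subfamily of size less than $\mathfrak{s}^+$ with no pseudointersection.

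Since the argument works uniformly for every ultrafilter $x$ on $\mathbb{N}$, no almost $P_{\mathfrak{s}^+}$-point exists. The proof is essentially a direct unpacking of the definitions of splitting family, ultrafilter, and pseudointersection, so there is no real obstacle; the only content is the observation that the ultrafilter dichotomy converts a splitting family of size $\mathfrak s$ into a subfamily of $x$ of the same size that cannot be pseudointersected.
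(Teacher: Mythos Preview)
Your proof is correct and essentially identical to the paper's: the paper closes the splitting family under complements and then looks at $x\cap\mathcal A$, which amounts to exactly your selection of $U_\alpha\in\{s_\alpha,\mathbb N\setminus s_\alpha\}\cap x$, and the contradiction is the same. The only difference is cosmetic phrasing.
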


\begin{proof}
  Let $\mathcal A$ be a splitting family of cardinality $\mathfrak s$.
  We may assume that $\mathcal A$ is closed under complements. 
  Let $x$ be any point of $\mathbb N^*$. It is easily seen
  that any pseudointersection of  $x\cap \mathcal A$
  is not split by any member of $\mathcal A$. Since $\mathcal A$
  is splitting, $x\cap \mathcal A$ has no pseudointersection,
  and so $x$ is not an almost $P_{\mathfrak s^+}$-point.
\end{proof}

Now we improve Theorem \ref{nodble}.

   \begin{theorem}
     If $A$ is a simple almost\label{3.6} 
 clopen set of type $\kappa$
      then $x_A$ is not an almost $P_{\kappa^+}$-point.
\end{theorem}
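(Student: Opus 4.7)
My plan is to split on the two possible values of $\kappa$ from Corollary~\ref{corol} and dispatch each case by a short reduction to a result already in hand.

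In the case $\kappa=\mathfrak{d}$, I would simply invoke Proposition~\ref{split}. Since the text records $\mathfrak{s}\leq\mathfrak{d}=\kappa$, we have $\mathfrak{s}^+\leq\kappa^+$, so any almost $P_{\kappa^+}$-point is \emph{a fortiori} an almost $P_{\mathfrak{s}^+}$-point; but Proposition~\ref{split} forbids such points. This case requires no further work.

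The case $\kappa=\mathfrak{b}$ is the one that actually uses the machinery developed for Lemma~\ref{isbdd}. By Corollary~\ref{corol}, the family $\{f_\xi:\xi<\mathfrak{b}\}$ is $<_{x_A}$-bounded, so one can pick $g\in\mathbb{N}^{\mathbb{N}}$ and set $U_\xi=\{n:f_\xi(n)<g(n)\}$, which lies in $x_A$ for every $\xi<\mathfrak{b}$. If $x_A$ were an almost $P_{\mathfrak{b}^+}$-point, then $\{U_\xi:\xi<\mathfrak{b}\}$ would be a collection of fewer than $\mathfrak{b}^+$ members of $x_A$, hence would admit an infinite pseudointersection $b$. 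Then $f_\xi\trestriction b<^{*} g\trestriction b$ for every $\xi<\mathfrak{b}$, contradicting Proposition~\ref{nob}, which asserts that $\{f_\xi\trestriction b:\xi<\mathfrak{b}\}$ is $<^{*}$-unbounded on every infinite $b$.

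I do not anticipate a genuine obstacle here: the theorem is essentially a packaging of Corollary~\ref{corol} together with Proposition~\ref{split} on the $\mathfrak{d}$ side and Proposition~\ref{nob} on the $\mathfrak{b}$ side. The only subtlety worth flagging is that the paper's definition of almost $P_\lambda$-point only demands that the pseudointersection $b$ be an infinite subset of $\mathbb{N}$, not that $b\in x_A$; this is exactly what Proposition~\ref{nob} needs, so no strengthening of the hypothesis is required.
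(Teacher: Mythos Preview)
Your proof is correct and follows essentially the same route as the paper's. The paper also disposes of the case $\kappa=\mathfrak{d}$ via Proposition~\ref{split} (phrased there as ``we must have $\kappa<\mathfrak{d}$''), and in the remaining case uses Corollary~\ref{corol} (via Lemma~\ref{isunbdd}) to obtain $<_{x_A}$-boundedness of $\{f_\xi:\xi<\mathfrak{b}\}$, defines the sets $U_\xi$, and appeals to Proposition~\ref{nob} to conclude that $\{U_\xi:\xi<\mathfrak{b}\}\subset x_A$ has no pseudointersection---exactly your argument for the $\kappa=\mathfrak{b}$ case.
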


\begin{proof}
  We first note that by Proposition \ref{split} we
  must have that $\kappa <\mathfrak d$. Therefore, by Lemma
   \ref{isunbdd}, 
$\{ f_\xi : \xi <\mathfrak b\}$ is
$<_{x_A}$-bounded. Choose any $g\in \mathbb N^{\mathbb N}$ so that
$f_\xi <_{x_A} g$ for all $\xi < \mathfrak b$. For each
 $\xi$, let $U_\xi = \{ n\in \mathbb N : f_\xi(n) < g(n)\}$.
By Proposition
\ref{nob},
 we have that the collection $\{ U_\xi : \xi < \mathfrak b\}\subset x$
 has no pseudointersection.  By Theorem \ref{nodble},
$\mathfrak b\leq \kappa$ and  this proves the theorem.
\end{proof}

  Now we prove Theorem \ref{forcing}.  We first prove the easier
  special case when $\kappa = \aleph_1$. An
  $\alpha$-length finite support iteration
  sequence of posets, denoted $(\langle \mathbb P_\beta :\beta\leq
  \alpha\rangle , \langle \dot Q_\beta : \beta<\alpha\rangle)$,
  will mean that
  $\langle \mathbb P_\beta :\beta\leq \alpha\rangle
$ is an increasing chain of posets,
$\dot Q_\beta$ is a $\mathbb P_\beta$-name of a poset
   for each $\beta<\alpha$, 
 and
 members $p$ of $\mathbb P_\alpha$ will be functions with domain a
 finite subset, $\supp(p)$, of $\alpha$ satisfying that
 $p\trestriction \beta\in \mathbb P_\beta$ forces
 that $p(\beta)\in \dot Q_\beta$
  for $\beta\in \supp(p)$. As usual, $p_2<p_1$ providing
  $p_2\trestriction \beta \forces{\mathbb P_\beta}{p_2(\beta) <
    p_1(\beta)}$ for all $\beta\in \supp(p_1)$. Since $\mathbb P_0$ is
  the trivial poset,  we will allow ourselves to simply specify a
  poset $Q_0$ in such an iteration sequence rather than the
    $\mathbb P_0$-name of that poset.

\begin{definition}
Let $\mathcal A=\{ a_\beta : \beta < \alpha\}$ be a
$\subset^*$-increasing chain of subsets of $\omega$,
and\label{defineQ}
let $\mathcal I$ be an ideal contained in
$\mathcal A^\perp$. We define the poset
$Q = Q(\mathcal A;\mathcal I)$ where
$q\in Q$ if $q = (F_q, \sigma_q, b_q)$ where
\begin{enumerate}
\item $F_q\in [\omega]^{<\aleph_0}$, 
\item $b_q\in \mathcal I$ is disjoint from $F_q$,
  \item $\sigma_q : H_q \rightarrow \omega$ and $H_q\in [\alpha]^{<\aleph_0}$,
\item for each $\beta\in H_q$, $a_\beta\setminus \sigma_q(\beta)$ is
  disjoint from $b_q$.
\end{enumerate}
For $r,q\in Q$ we define $r<q$ providing $F_r\supset F_q$,
$\sigma_r \supset \sigma_q$, and
 $b_r\supset b_q$.
\end{definition}

  \begin{lemma}
  If $\mathcal A = \{a_\beta : \beta <\alpha\}$ is\label{noncf}
 a
  $\subset^*$-increasing chain of subsets of $\omega$ and if
  $\mathcal I$ is an ideal contained in $\mathcal A^\perp$,
  then $Q(\mathcal A; \mathcal I)$ is ccc whenever $\cf(\alpha)$ is
  not equal to $\omega_1$.  In addition,
  $Q(\mathcal A; [\omega]^{<\omega})$ is ccc for any
  infinite $\subset^*$-increasing chain $\mathcal A$.
\end{lemma}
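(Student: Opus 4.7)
The plan is to verify ccc by applying the standard $\Delta$-system plus pigeonhole machinery to an arbitrary uncountable family $\{q_\gamma : \gamma < \omega_1\}$ of conditions and producing two compatible members. First I would apply the $\Delta$-system lemma to $\{H_{q_\gamma}\}$ and pass to an uncountable subfamily in which $H_{q_\gamma} = R \cup S_\gamma$ with the $S_\gamma$ pairwise disjoint and $|S_\gamma| = k$ constant, $F_{q_\gamma} = F$ is constant, and $\sigma_{q_\gamma} \trestriction R = \sigma^*$ is constant.

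For the easier second assertion (the case $\mathcal I = [\omega]^{<\aleph_0}$), the $b_{q_\gamma}$'s range over the countable set $[\omega]^{<\aleph_0}$, so a further pigeonhole lets me assume $b_{q_\gamma} = b$ is constant. The amalgamation $(F, \sigma_{q_{\gamma_0}} \cup \sigma_{q_{\gamma_1}}, b)$ is then a condition below any two of the $q_\gamma$, since condition (4) on each $\beta \in H_{q_{\gamma_i}}$ reduces to what already held for $q_{\gamma_i}$ using the same $b$.

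For the first assertion, the cofinality hypothesis is used to choose a fixed $\alpha_0 < \alpha$ with $H_{q_\gamma} \subset \alpha_0$ for all $\gamma$ in an uncountable subfamily: by pigeonhole against a cofinal $\omega$-sequence if $\cf(\alpha) = \omega$, by taking the supremum of $\bigcup_\gamma H_{q_\gamma}$ (automatically bounded below $\alpha$) if $\cf(\alpha) > \omega_1$, and trivially if $\alpha$ is a successor. Since $\mathcal I \subset \mathcal A^\perp$, $T_\beta := a_\beta \setminus a_{\alpha_0}$ is finite for each $\beta \in H_{q_\gamma}$, and $b_{q_\gamma} \cap a_{\alpha_0}$ is finite. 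Pigeonholing against the countable sets $[\omega]^{<\aleph_0}$ and $[a_{\alpha_0}]^{<\aleph_0}$, I refine so that $V_\gamma := \bigcup_{\beta \in H_{q_\gamma}} T_\beta$ equals a fixed finite $V$, $b_{q_\gamma} \cap V$ equals a fixed $U$, and $b_{q_\gamma} \cap a_{\alpha_0}$ equals a fixed $e$.

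The compatibility check for $r = (F, \sigma_{q_{\gamma_0}} \cup \sigma_{q_{\gamma_1}}, b_{q_{\gamma_0}} \cup b_{q_{\gamma_1}})$ then comes down to the observation that for $\beta \in S_{\gamma_0}$,
\[
a_\beta \cap b_{q_{\gamma_1}} = (a_\beta \cap a_{\alpha_0} \cap b_{q_{\gamma_1}}) \cup (T_\beta \cap b_{q_{\gamma_1}}) = (a_\beta \cap e) \cup (T_\beta \cap U),
\]
where the second equality uses $T_\beta \subset V_{\gamma_0} = V$ to replace $b_{q_{\gamma_1}} \cap T_\beta$ by $U \cap T_\beta$. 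Since $e, U \subset b_{q_{\gamma_0}}$ and $T_\beta \subset a_\beta$, this intersection lies in $a_\beta \cap b_{q_{\gamma_0}} \subset [0, \sigma_{q_{\gamma_0}}(\beta))$, which is exactly what condition (4) demands of $r$ at $\beta$; the symmetric argument with the roles of $\gamma_0,\gamma_1$ swapped handles $S_{\gamma_1}$, while the verification on the root $R$ is automatic from $\sigma_{q_{\gamma_i}} \trestriction R = \sigma^*$. The crux of the argument, and the sole place where the cofinality hypothesis enters, is the uniform bound $\alpha_0$ that enables the containment $T_\beta \subset V$ --- this reduction fails precisely when $\cf(\alpha) = \omega_1$.
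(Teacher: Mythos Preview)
Your argument is correct and is precisely the standard $\Delta$-system/pigeonhole argument the paper has in mind when it cites Baumgartner's Theorem~4.2; the paper does not spell out the first assertion at all and gives only the one-line observation you reproduce for the second. One tiny slip of attribution: the finiteness of $T_\beta = a_\beta \setminus a_{\alpha_0}$ comes from the $\subset^*$-increasing chain, not from $\mathcal I \subset \mathcal A^\perp$; the latter hypothesis is what makes $b_{q_\gamma}\cap a_{\alpha_0}$ finite, exactly as you use it in the next clause.
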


  \begin{proof}
    The proofs are standard. The first statement 
is  basically the same
    as Theorem 4.2 of \cite{BaPFA}. 
For the last, $Q(\mathcal A; [\omega]^{<\omega})$ is ccc since
 conditions
 $p,q\in   Q(\mathcal A; [\omega]^{<\omega})$ are compatible
so long as  $F_p=F_q$, $b_p=b_q$, and $\sigma_p\cup \sigma_q$ is a
function. 
  \end{proof}

  \begin{definition}
If $Q$ is $Q(\mathcal A;\mathcal I)$ for some $\subset^*$-increasing
chain $\mathcal A$ of\label{Qgeneric}
subsets of $\omega$, and $\mathcal I\subset \mathcal A^\perp$
is an ideal, then the $Q$-generic set $\dot a_Q$ is defined as
the natural name  $\{ (\check F_q,  q) : q\in Q\}$, i.e. for each
$Q$-generic filter $G$, $\val_G(\dot a_Q)$ is equal to the union
of the family $\{ F_q : q\in G\}$. 
  \end{definition}

\begin{proposition}
For any $\subset^*$-increasing
chain $\mathcal A = \{a_\beta :\beta<\alpha\}$
 of\label{chain} subsets of $\omega$
and ideal  $\mathcal I\subset \mathcal A^\perp$,
$Q = Q(\mathcal A;\mathcal I)$ forces
that
$a_\beta\subset^* \dot a_Q$
and $\dot a_Q \cap  I=^*\emptyset$, 
for all $\beta\in \alpha$ and $I\in \mathcal I$.
\end{proposition}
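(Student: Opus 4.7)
The plan is to prove both assertions by routine density arguments in a $Q$-generic filter $G$, using the four clauses of Definition \ref{defineQ}. The two claims are essentially dual: $\dot a_Q$ absorbs a cofinite tail of every $a_\beta$, and $\dot a_Q$ meets every $I\in \mathcal I$ in only a finite set.

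For $a_\beta\subset^* \dot a_Q$ I would carry out two density steps. First, the set $D_\beta=\{q:\beta\in H_q\}$ is dense: given any $q$, since $b_q\in\mathcal I\subset\mathcal A^\perp$ the intersection $a_\beta\cap b_q$ is finite, so choosing $n>\max(a_\beta\cap b_q)$ makes $(F_q,\sigma_q\cup\{(\beta,n)\},b_q)$ a legal extension (clause (4) at $\beta$ holds by choice of $n$; the other clauses are unchanged). Second, once $q\in G\cap D_\beta$ with $n_0=\sigma_q(\beta)$, for each $k\in a_\beta$ with $k>n_0$ the set $\{r\le q:k\in F_r\}$ is dense below $q$: any $r\le q$ has $\sigma_r(\beta)=n_0$ because $\sigma_r\supset\sigma_q$, so clause (4) forces $k\notin b_r$, and $(F_r\cup\{k\},\sigma_r,b_r)$ is a legal strengthening. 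Genericity then gives $a_\beta\setminus n_0\subseteq \dot a_Q$.

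For $\dot a_Q\cap I=^*\emptyset$ the key density claim is that $D_I=\{q:I\setminus b_q\text{ is finite}\}$ is dense. Given $q$, I would pick $m$ exceeding $\max F_q$ and exceeding $\max(a_\beta\cap I)$ for each of the finitely many $\beta\in H_q$; both choices are possible since $I\in\mathcal A^\perp$ makes each $a_\beta\cap I$ finite. Set $b'=b_q\cup(I\setminus m)$, which lies in $\mathcal I$ because $\mathcal I$ is an ideal. Then $(F_q,\sigma_q,b')$ verifies clauses (1)--(4): clause (2) holds by the choice of $m>\max F_q$, and clause (4) for each $\beta\in H_q$ holds because $a_\beta\cap(I\setminus m)\subseteq (a_\beta\cap I)\setminus m=\emptyset$. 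Hence this extension lies in $D_I$. Once $q\in G\cap D_I$, every $r\le q$ satisfies $F_r\cap b_q=\emptyset$ (from $b_r\supseteq b_q$ and $F_r\cap b_r=\emptyset$), so $\dot a_Q\cap I\subseteq I\setminus b_q$ is finite.

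The only delicate point in either half is preserving clause (4) when freezing a new value $\sigma(\beta)$ or enlarging $b$. Both situations rely squarely on the standing hypothesis $\mathcal I\subset \mathcal A^\perp$, which guarantees $a_\beta\cap b$ is finite for every $b\in\mathcal I$, so a sufficiently large $n$ or $m$ absorbs the obstruction. Once clause (4) is checked, the remainder is standard genericity bookkeeping, and note that we only obtain $a_\beta\setminus \sigma_q(\beta)\subseteq \dot a_Q$ rather than all of $a_\beta$ — which is harmless since $\sigma_q(\beta)$ is finite, giving the desired $\subset^*$-containment.
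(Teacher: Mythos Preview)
Your proof is correct and follows essentially the same density argument as the paper's: the paper combines your $D_\beta$ and $D_I$ into a single dense set $D=\{q:\beta\in\dom(\sigma_q)\text{ and }I\subset^* b_q\}$ and uses the dense sets $D_\ell=\{q:\ell\in F_q\cup b_q\}$ in place of your density-below-$q$ step, but the underlying mechanism---freezing $\sigma_q(\beta)$ to trap a tail of $a_\beta$ via clause~(4), and enlarging $b_q$ to absorb $I$---is identical. Your write-up actually supplies the verifications (why one can extend into $D_\beta$ and $D_I$ while preserving clause~(4)) that the paper leaves implicit.
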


\begin{proof}
  It is immediate that the set $D$ of $q=(F_q,\sigma_q,b_q)
\in Q(\mathcal A; \mathcal I)$
such that  $\beta\in \dom(\sigma_q)$ and $I\subset^* b_q$
is a dense set. Similarly, for each
$\ell\in \omega$,  $D_\ell = \{ q=(F_q,\sigma_q,b_q)\in Q
   : \ell \in F_q\cup b_q\}$ is also dense.
So it suffices to prove that
if $q\in D$, then
  $q\Vdash a_\beta\setminus\sigma_\beta \subset \dot a_Q$
  and that $q\Vdash \dot a_Q \cap I \subset (I\setminus b_q)$.
  Let $G$ be a generic filter with $q\in G$. If follows from the
  definition of the ordering on $Q$ that for $r<q$ in $G$,
  $I\setminus b_q \subset b_r$ and, since
  $F_r\cap b_r$ is empty, $F_r\cap I \subset I\setminus b_q$.
  Now let $\ell\in a_\beta \setminus \sigma_q(\beta)$ and choose
  $r\in G$ with $r<q$ and $r\in D_\ell$. Since
   $\sigma_r(\beta) = \sigma_q(\beta)$ and $a_\beta\setminus
   \sigma_r(\beta)$ is disjoint from $b_r$, it follows
   that $\ell\in F_r$. This means that $\ell \in \val_G(\dot a_Q)$.   
\end{proof}

    \begin{lemma} If $\lambda$ is a regular cardinal with $\mathfrak
    c\leq\lambda $, then there is a ccc forcing extension in which
    there is a simple almost clopen set $A$ of type $\omega_1$
    such\label{omega1case} 
 that there is a strongly discrete free $\lambda$-sequence
    converging to $x_A$.
  \end{lemma}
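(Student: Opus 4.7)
The plan is to construct the extension by a finite support ccc iteration $(\mathbb P_\alpha,\dot Q_\alpha:\alpha<\lambda)$ of length $\lambda$ in which every $\dot Q_\alpha$ has the form $Q(\mathcal A_\alpha;\mathcal I_\alpha)$ of Definition~\ref{defineQ}. Along the iteration I build simultaneously (i) a $\subset^*$-increasing chain $\{a_\xi:\xi<\omega_1\}$ witnessing the type, and (ii) a pairwise disjoint family $\{c_\beta:\beta<\lambda\}\subset \mathcal A^\perp$ from which the strongly discrete converging $\lambda$-sequence will be extracted. Using $\mathfrak c\leq\lambda$ and the regularity of $\lambda$, standard book-keeping distributes the $\lambda$ stages into $\omega_1$ chain-extension stages, $\lambda$ disjoint-family stages, and $\lambda$ decision stages that walk through every $\mathbb P_\alpha$-name of a subset of $\omega$.

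At a chain stage indexed by $\xi<\omega_1$ I let $\dot Q_\alpha = Q(\{a_{\xi'}:\xi'<\xi\};\mathcal I_\alpha)$, where $\mathcal I_\alpha$ is the ideal generated by every $c_\beta$ added so far together with every set committed to $\mathcal I_A^\perp$ at a previous decision stage; since the chain length is $<\omega_1$, Lemma~\ref{noncf} applies and $\dot Q_\alpha$ is ccc, and by Proposition~\ref{chain} the generic $a_\xi$ $\subset^*$-extends the chain and remains orthogonal to $\mathcal I_\alpha$. At a disjoint-family stage indexed by $\beta<\lambda$ I let $\dot Q_\alpha = Q(\emptyset;\mathcal J_\alpha)$, where $\mathcal J_\alpha$ is generated by every $a_\xi$ so far, every earlier $c_{\beta'}$, and every committed orthogonal set; again Lemma~\ref{noncf} gives ccc, and the generic $c_\beta$ is mod-finite disjoint from every earlier $a_\xi$, every $c_{\beta'}$ with $\beta'<\beta$, and every committed orthogonal set. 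At a decision stage for a name $\dot d$ I apply the standard tie-point dichotomy: using a ccc density argument, find $\xi<\omega_1$ so that one of $d\setminus a_\xi$ or $(\omega\setminus d)\setminus a_\xi$ is compatible with the existing orthogonal commitments, and commit the compatible one to $\mathcal I_A^\perp$ by adding it to every subsequent ideal $\mathcal I_\alpha$ and $\mathcal J_\alpha$. This forces either $d$ or $\omega\setminus d$ into $\mathcal I_A+\mathcal I_A^\perp$, and over the iteration every subset of $\omega$ in $V[G_\lambda]$ is handled.

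In $V[G_\lambda]$ the chain $\{a_\xi\}$ is $\subset^*$-cofinal in $\mathcal I_A$, and $\mathcal I_A\cup \mathcal I_A^\perp$ generates a maximal ideal whose dual ultrafilter $x_A$ makes $A=\{x_A\}\cup\bigcup_{\xi<\omega_1}a_\xi^*$ a simple almost clopen set of type $\omega_1$. Each $c_\beta$ lies in $\mathcal I_A^\perp=\mathcal I_B$, so $c_\beta^*\subset B$; pick any $y_\beta\in c_\beta^*$, and strong discreteness of $\{y_\beta:\beta<\lambda\}$ is immediate from the pairwise disjointness of the $c_\beta$. For convergence to $x_A$: given $c\in x_A$, we have $\omega\setminus c\subset^* a_\xi\cup b$ with $b\in\mathcal I_A^\perp$ committed at some stage $\alpha_0$, so every $c_\beta$ added after $\alpha_0$ is mod-finite disjoint from both $a_\xi$ and $b$, whence $c_\beta\subset^* c$ and $y_\beta\in c^*$. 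Freeness is then automatic: for any $\beta_0<\lambda$, the clopen $U=(\bigcup_{\beta<\beta_0}c_\beta)^*$ contains the initial segment and, being clopen, its closure, while pairwise disjointness places every $c_\gamma$ with $\gamma\geq\beta_0$, hence every $y_\gamma$ and the closure of the final segment, in the clopen complement of $U$. The main technical obstacle is the decision-stage dichotomy: one must verify that for each $\dot d$ the required $\xi$ exists generically and that the resulting commitment is consistent with every earlier chain and orthogonal commitment, all while ensuring that no chain used at a future chain stage ever acquires cofinality $\omega_1$, which would break the ccc guarantee of Lemma~\ref{noncf}.
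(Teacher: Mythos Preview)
Your plan diverges substantially from the paper's and the step you flag as ``the main technical obstacle'' is in fact a genuine gap that cannot be repaired within your framework.

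The paper does \emph{not} iterate for $\lambda$ steps with bookkeeping. It first adds, by a single ccc poset $Q_0$, a $\subset^*$-increasing $\lambda$-chain $\{b_\zeta:\zeta<\lambda\}$, and then runs a finite-support iteration of length only $\omega_1$. The device you are missing is that at every successor stage $\alpha=\beta+1$ the ideal $\dot{\mathcal I}_\alpha$ is taken to be $\dot{\mathcal J}'_{\beta+1}\cap\{\dot a_\beta\}^\perp$ where $\dot{\mathcal J}'_{\beta+1}$ is a \emph{maximal} ideal extending $\dot{\mathcal J}_{\beta+1}\cup(\dot{\mathcal J}_{\beta+1})^\perp$. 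The dual $\dot y_{\beta+1}$ is then already an ultrafilter in $V^{\mathbb P_{\beta+1}}$. Since the iteration has length $\omega_1$ and is ccc with finite supports, every $\mathbb P_{\omega_1}$-name of a subset of $\omega$ is a $\mathbb P_\beta$-name for some $\beta<\omega_1$, so $\bigcup_\beta\dot y_{\beta+1}$ is forced to be an ultrafilter with no bookkeeping at all. The free strongly discrete $\lambda$-sequence is then extracted from the $b_\zeta$'s by a club argument using that $\{b_\zeta:\zeta<\lambda\}$ remains $\subset^*$-unbounded in every $\dot{\mathcal I}_{\alpha+1}$.

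Your decision-stage dichotomy, by contrast, does not work as written. You propose to commit $d\setminus a_\xi$ (or $(\omega\setminus d)\setminus a_\xi$) to $\mathcal I_A^\perp$, but this set need not be orthogonal to those $a_{\xi'}$ with $\xi'>\xi$ that are \emph{already built}; you can only force orthogonality against \emph{future} $a_{\xi'}$'s by feeding the commitment into their ideals. Hence you would need $\xi$ to be the current maximum of the chain at the moment of decision. But you have only $\omega_1$ chain stages inside a length-$\lambda$ iteration with $\lambda>\omega_1$ regular, so the chain stages are bounded below some $\gamma<\lambda$, and every decision stage beyond $\gamma$ faces the completed chain $\{a_\xi:\xi<\omega_1\}$ with no maximum. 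For a set $d$ that (together with its complement) meets every increment $a_{\xi+1}\setminus a_\xi$ infinitely, neither $d\setminus a_\xi$ nor $(\omega\setminus d)\setminus a_\xi$ lies in $\{a_{\xi'}:\xi'<\omega_1\}^\perp$, and there is nothing to commit. A secondary issue is that enumerating all $\mathbb P_\lambda$-nice names for subsets of $\omega$ in $\lambda$ steps tacitly requires $\lambda^{\aleph_0}=\lambda$, which you have not arranged. The paper's short iteration with maximal ideals at successor stages sidesteps both problems.
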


  \begin{proof}
There are ccc posets of cardinality 
 $\lambda$ that add a
    strictly $\subset^*$-increasing sequence $\{ b_\zeta :
    \zeta<\lambda\}$ of infinite subset of $\omega$ (e.g.
    \cite{KunenBook}*{II Ex. 22}).
    Alternatively, by Definition \ref{defineQ} and
     Lemma \ref{noncf},
we could let $Q_0$ be a $\lambda$-length
    finite support sequence of posets of the form
    $Q(\{b_\beta : \beta <\zeta\}; [\omega]^{<\omega})$
    and recursively let $b_\zeta$ be the resulting $\dot a_Q$ as
    in Definition \ref{Qgeneric}.
  
 For convenience we now work in
    such a ccc forcing extension and we construct a finite support
  ccc  iteration sequence of cardinality $\lambda$ and
length $\omega_1$ that will add
    a strictly $\subset^*$-increasing sequence
     $\{ a_\alpha : \alpha \in \omega_1\}$ of infinite subsets of
     $\omega$ so that the closure, $A$, of $\bigcup\{ a_\alpha^* :
     \alpha \in \omega_1\}$ is almost clopen. Suppose 
that we do this
in such a way that $\{b_\zeta : \zeta <\lambda\}$ is contained in
$\{ a_\alpha : \alpha\in \omega_1\}^\perp$ and, for all $U\in x_A$,
and all $\zeta<\lambda$, there is an $\eta<\lambda$ 
such that $U\cap (b_\eta\setminus b_\zeta)$ is infinite. 
 We check that there is then a strongly discrete free
 $\lambda$-sequence  converging to $x_A$.  Let $\{ U_\zeta : \zeta <
 \lambda\}$ enumerate  the members of $x_A$. 
 There is a cub $C\subset \lambda$ satisfying that
 for each $\delta\in C$,
 the family
  $\{ U_\xi : \xi<\delta\}$ is closed under finite intersections. 
Recursively define a
 strictly increasing 
function $g$ from $C$ into $\lambda$
 satisfying that $
U_\zeta\cap (b_{g(\delta)}\setminus b_\delta)$ is infinite
for all $\zeta<\delta\in C$.
Now, for each $\delta\in
 C$, let $x_\delta $ be an ultrafilter extending 
the family 
$\{ U_\zeta\cap (b_{g(\delta)}\setminus b_\delta) : \zeta<\delta\}$. 
Pass to a cub subset $C_1\subset C$ satisfying that
 $g(\eta)<\delta$ for all $\delta\in C$ and
 $\eta\in \delta\cap C$. It follows immediately
 that $\{ x_\delta : \delta \in C_1\}$ is strongly discrete
 and free. Similarly, the sequence converges to $x_A$ since
  $U_\zeta\in x_\delta$ for all $\zeta<\delta\in C_1$. 

  Now we  construct the iteration sequence to define the
$\subset^*$-increasing chain $\{a_\alpha : \alpha\in \omega_1\}$ that
will be cofinal in $\mathcal I_A$. We will use iterands of the form
$\dot Q_\alpha = 
Q(\{ \dot a_\beta : \beta < \alpha\}; \dot {\mathcal I}_\alpha)$ for
$0<\alpha<\omega_1$, and will recursively let
$\dot a_\alpha$ be the standard
$\mathbb P_{\alpha+1}$-name for $a_{\dot Q_\alpha}$ (as in
Definition \ref{Qgeneric}). The fact that 
$\{ a_\alpha : \alpha\in \omega_1\}$ will be a $\subset^*$-chain
follows from Proposition \ref{chain}.
Clearly the only choices we have
for the construction are the definition of $a_0$ and, by recursion,
the definition of $\dot {\mathcal I}_\alpha$. We will recursively
ensure that $\forces{\mathbb P_\alpha} {\{\dot a_\beta : \beta
  <\alpha\}
  \subset \{b_\zeta : \zeta<\lambda\}^\perp}$ simply by ensuring
that $\forces{\mathbb P_\alpha}{
  \{b_\zeta : \zeta<\lambda\} \subset \dot{\mathcal I}_\alpha}$.

To start the process, we let $\mathcal I_1$ be any maximal ideal
extending the ideal generated by $\{b_\zeta :\zeta<\lambda\}\cup
\{ b_\zeta : \zeta<\lambda\}^\perp$. Very likely
$\{b_\zeta : \zeta<\lambda\}^\perp$ is simply $[\omega]^{<\omega}$, so
 we 
 let $a_0$ be exceptional and equal  the emptyset. We now have our
 definition (working in the extension by $Q_0$)
 of $Q_1 = Q(\{ a_0\}; \mathcal I_1)$ and the generic set
  $\dot a_1 = \dot a_{Q_1}$ is forced to be almost disjoint from every
  member of $\mathcal I_1$ (it is a pseudointersection of the
ultrafilter      dual to $\mathcal I_1$). Now assume that
$\alpha<\omega_1$ and that we have defined $\dot{\mathcal I}_\beta$
for all $\beta <\alpha$. We recursively also ensure that,
 for $\beta < \gamma < \alpha$,
 the
 $\mathbb P_{\beta}$-name $\dot{\mathcal I}_{\beta+1}$ is a subset
 of the $\mathbb P_{\gamma}$-name $\dot {\mathcal I}_\gamma$,
 and that $\mathbb P_{\gamma}$ forces $\dot {\mathcal I}_\gamma$
 is contained in $\{\dot a_\beta : \beta <\gamma\}^\perp$.
 For the definition of $\dot{\mathcal I}_\alpha$ we break into
 three cases. 
If $\alpha$ is a limit ordinal, then we define
$\dot {\mathcal I}_\alpha$ to be the $\mathbb P_{\alpha}$-name of
the ideal $\{\dot a_\beta : \beta<\alpha \}^\perp$. By induction,
 we have, for $\gamma<\alpha$,
 that $\forces{\mathbb P_\alpha}{\dot{\mathcal I}_{\gamma+1}
   \subset \{\dot a_\beta : \beta<\alpha \}^\perp = \dot{\mathcal
     I}_\alpha}$, as required. In the case that $\alpha =\beta+1$
 for a successor $\beta$,
 we note that $\mathbb P_{\beta+1}$ forces that (by the
 genericity of $\dot a_\beta$)
the family
$\{ \dot a_\beta\} \cup \dot{\mathcal I}_\beta$
generates a proper ideal $\dot {\mathcal J}_\alpha$.
In the case that $\beta$ is a limit and $\alpha = \beta+1$,
 we note that $\mathbb P_{\beta+1}$ forces that 
the family
$\{ \dot a_\beta\} \cup \bigcup\{
\dot{\mathcal I}_{\gamma+1}:\gamma<\beta\} $ 
also generates a proper ideal $\dot {\mathcal J}_\alpha$.
Then, in either case where $\alpha = \beta+1$,
 we let $\dot{\mathcal J}'_\alpha$ be the
$\mathbb P_{\alpha}$-name of any maximal ideal that contains
$\dot{\mathcal J}_\alpha\cup (\dot{\mathcal J}_\alpha)^\perp$. The
definition of  $\dot{\mathcal I}_\alpha$ is then
the $\mathbb P_{\alpha}$-name of
$\dot{\mathcal  J}_\alpha' \cap \{ \dot a_\beta\}^\perp$.
For convenience, let $\dot y_{\beta+1}$ denote the
$\mathbb P_{\beta+1}$-name of this ultrafilter,
and let us notice that $\{ \omega\setminus (\dot a_\beta\cup b) : b\in
\dot{\mathcal I}_{\beta+1}\}$
is forced to be a base for
$\dot y_{\beta+1}$.
The set $\dot a_{\beta+1}\setminus \dot a_\beta$ will be a
pseudointersection
of $\dot y_{\beta+1}$.

 This completes the definition of the poset
   $\mathbb P_{\omega_1}$.
 Now we establish some properties.
Let $\dot A$ denote 
$\mathbb P_{\omega_1}$-name of the  closure in $\omega^*$
of the open set $\bigcup \{ \dot
 a_\alpha^* : \alpha\in \omega_1\}$. 
 
\bgroup

\def\proofname{Proof of Claim:~}

 \begin{claim}
   For each $\beta<\alpha <\omega_1$, $\mathbb P_{\alpha+1}$ forces
   that $\dot a_\alpha\setminus \dot a_{\beta}$ is a pseudointersection
   of    the filter $\dot y_{\beta+1}$. 
 \end{claim}

 \begin{proof}
We proceed by induction on $\alpha\geq\beta+1$. For $\alpha=\beta+1$,
   $\dot a_\alpha$ is almost disjoint
   from each member of $\dot{\mathcal I}_\alpha$, and so
   $\dot a_\alpha \setminus \dot a_\beta$ is almost
   disjoint from every member of $\dot{\mathcal J}_\alpha'$. Thus
   $\dot a_\alpha\setminus \dot a_\beta$ is forced to be mod finite
   contained in every member of the
   dual filter, namely
   $\dot y_{\beta+1}$.  Similarly, for $\alpha >\beta+1$,
   $\dot a_\alpha$ is forced to be almost disjoint from
   each member of $\dot {\mathcal I}_\alpha$. This means
   that $\dot a_\alpha$ is almost disjoint from each member
   of $\dot {\mathcal I}_{\beta+1}$, and so
   $\dot a_\alpha\setminus \dot a_{\beta+1}$ is also 
almost   disjoint from every member of $\dot{\mathcal J}_{\beta+1}'$.  
 \end{proof}

 \begin{claim}
   The 
family $ \{ \dot y_{\beta+1}  : \beta<\omega_1\}$ is a
family of $\mathbb P_{\omega_1}$-names and the union
is forced to 
generate
an ultrafilter
 $\dot x_{\dot A}$ that is indeed
 the unique boundary point of $\dot A$. 
 \end{claim}

 \begin{proof}
Since $\dot {\mathcal I}_{\beta+1} $ is contained in $\dot {\mathcal
  I}_{\alpha+1}$, and 
$\mathbb P_{\omega_1}$ forces that $\dot a_{\beta}\subset^* \dot a_\gamma$,
we have that $\mathbb P_{\omega_1}$ forces
 that $\bigcup \{ \dot y_{\beta+1} :
  \beta<\omega_1\}$ is a filter. 
  Furthermore, since $\mathbb P_{\omega_1}$ is ccc, every 
  $\mathbb P_{\omega_1}$-name of a subset of   $\omega$ is equal to
  a $\mathbb P_{\beta}$-name for some $\beta <\omega_1$. 
The fact that, for each $\beta<\omega_1$, 
   $\mathbb P_{\beta+1}$ forces that $\dot y_{\beta+1}$ is an
  ultrafilter implies that $\mathbb P_{\omega_1}$ forces that
    $\dot x_{\dot A}$ is an ultrafilter.
  Finally, it follows immediately from the previous claim
  that $\dot x_{\dot A}$ is the unique boundary point of $\dot A$.
 \end{proof}

 \begin{claim} For each $0<\alpha<\omega_1$, $\mathbb P_{\alpha+1}$
  forces
  that $\{ b_\zeta : \zeta <\lambda\}$ is
$\subset^*$-unbounded in
 $\dot{\mathcal I}_{\alpha+1}$.
\end{claim}

\begin{proof}
  We prove this by induction on $\alpha$.
We know that  $\mathbb P_{\alpha+1}$ forces that $\dot a_\alpha$ is
almost disjoint from every member of
$\{ b_\zeta : \zeta <\lambda\}$. Therefore,
 if  $\mathbb P_{\beta+1}$ forces that 
$\{ b_\zeta : \zeta <\lambda\}$ 
 is $\subset^*$-unbounded in
 $\dot{\mathcal I}_{\beta+1}$ for each $\beta <\alpha$, it follows that 
$\{ b_\zeta : \zeta <\lambda\}$  is $\subset^*$-unbounded 
 in what we called $\dot{\mathcal J}_{\alpha+1}$ above. 
In addition, we have that
 $\dot {\mathcal J}_{\alpha+1}^\perp \subset \{ b_\zeta : \zeta
 <\lambda\}^\perp$,  so 
 we have that $\mathbb P_{\alpha+1} $
 forces that $\{ b_\zeta : \zeta <\lambda\}$  is $\subset^*$-unbounded
 in $\dot {\mathcal I}_{\alpha+1}$. 
 \end{proof}

\egroup    

To finish the proof of the Lemma, we have to verify
that $\mathbb P_{\omega_1}$ forces that for all 
$U\in \dot x_{\dot A}$,
and all $\zeta<\lambda$, there is an $\eta<\lambda$ 
such that $U\cap (b_\eta\setminus b_\zeta)$ is infinite. 
Let $\zeta<\lambda$ be given and suppose that
$\dot U$ is a $\mathbb P_\alpha$-name of a member of
$\dot x_{\dot A}$ for some $\alpha<\omega_1$. Of course
this means that $\dot U$ is a member of $\dot y_{\alpha+1}$.
Now
consider the $\mathbb P_{\alpha+1}$-name,
 $\dot b$, of 
 the set $b_\zeta\cup \left(\omega\setminus (\dot U\cup  \dot
   a_{\alpha+1})\right)$.
 Evidently, $\dot b$ is forced to be disjoint from $\dot a_{\alpha+1}$
 and also is forced to not be in $\dot y_{\alpha+2}$. It follows
 that $\dot b$ is an element of $\dot {\mathcal I}_{\alpha+2}$.
 By Claim 3, there is an $\eta$ and a condition $p\in \mathbb
 P_{\alpha+2}$ such that $p$ forces that
 $b_\eta\setminus \dot b$ is infinite. Since $b_\eta\setminus
 \dot b$ is mod finite equal to $(b_\eta\setminus b_\zeta)\cap \dot
 U$, this completes the proof.
  \end{proof}

  To prove Theorem \ref{forcing}, we want to continue the recursive
  construction of Lemma \ref{omega1case} to a $\kappa$-length
  iteration of posets of the same form, namely
    $Q(\{\dot a_\beta : \beta <\alpha\} ; \dot {\mathcal
      I}_\alpha)$. It turns out that with the exact construction of
    Lemma
    \ref{omega1case}, $\mathbb P_{\omega_1}$ forces that
    $Q(\{\dot a_\beta : \beta<\omega_1\};
    \{ \dot a_\beta : \beta<\omega_1\}^\perp)$ is not ccc. For limit
    ordinals $\alpha$ of uncountable cofinality, it is likely
that we have
 to use $\{\dot a_\beta :\beta<\alpha\}^\perp$ as our choice
    for $\dot{\mathcal I}_\alpha$. However, we do have more
    flexibility at limits of countable cofinality and this
    is critical for extending the construction to any length $\kappa$.

\begin{definition}
We say that $\mathcal A$ is a pre-ccc sequence if
\begin{enumerate}
\item $\mathcal A = \{ a_\beta : \beta <\alpha\}$ for some
  increasing
   $\subset^*$-chain of subsets of $\omega$ with
   $\cf(\alpha)=\omega_1$,
   \item for each increasing  sequence $\{ \gamma_\xi : \xi\in
     \omega_1\}$ cofinal in $ \alpha$, and each sequence
     $\{ b_\xi : \xi\in \omega_1\}\subset \mathcal A^\perp$,
     such that $a_{\gamma_\xi}\cap b_\xi=\emptyset$ for all $\xi$,
     there are $\xi<\eta$ such that
      $a_{\gamma_\xi}\subset a_{\gamma_\eta}$ and $b_{\xi}\cap
      a_{\gamma_\eta}$ is empty.
\end{enumerate}
\end{definition}

\begin{lemma} If $\mathcal A$ is a pre-ccc sequence,
  then $Q(\mathcal A;\mathcal I)$ is ccc
  for any ideal $\mathcal I\subset \mathcal A^\perp$.
\end{lemma}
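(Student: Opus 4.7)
The plan is to derive a contradiction from a putative uncountable antichain $\{q_\xi : \xi < \omega_1\} \subseteq Q(\mathcal A;\mathcal I)$ via a single application of the pre-ccc property, after extensive thinning. First I note that any condition $q$ and any $\beta<\alpha$ with $\beta\notin H_q$ admit a strengthening with $\beta$ adjoined to $\dom(\sigma_q)$: since $b_q\in\mathcal I\subseteq\mathcal A^\perp$, the set $a_\beta\cap b_q$ is finite, so one may take $n$ with $a_\beta\setminus n$ disjoint from $b_q$ and set $\sigma_q(\beta)=n$. Since strengthening preserves antichain-ness, after fixing a cofinal $\omega_1$-sequence $\{\delta_\zeta : \zeta<\omega_1\}$ in $\alpha$ I may adjoin to each $H_{q_\xi}$ some $\delta_{\zeta(\xi)}>\max H_{q_\xi}$ and then thin to arrange that $\max H_{q_\xi}$ is strictly increasing and cofinal in $\alpha$.

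Next I apply the $\Delta$-system lemma together with standard pigeonhole thinnings, extracting an uncountable sub-antichain with uniform structure: the $H_{q_\xi}$ form a $\Delta$-system of common size with root $H^*$, enumerated as $H_{q_\xi}=H^*\cup\{\beta_{\xi,1}<\cdots<\beta_{\xi,k}\}$, with $\max H_{q_\zeta}<\min(H_{q_\xi}\setminus H^*)$ for $\zeta<\xi$; $\sigma_{q_\xi}\trestriction H^*=\sigma^*$ and $\sigma_{q_\xi}(\beta_{\xi,i})=n_i$ are constant; $F_{q_\xi}=F^*$ is constant; $\{\beta_{\xi,k}\}$ remains cofinal in $\alpha$; and for each $i<k$ the finite set $E_i:=a_{\beta_{\xi,i}}\setminus a_{\beta_{\xi,k}}$ is constant. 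Let $N$ be one larger than every natural in $F^*\cup\bigcup_i E_i\cup\{n_1,\ldots,n_k\}\cup\ran(\sigma^*)$, and thin once more to make the trace $T:=b_{q_\xi}\cap N$ constant.

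Set $b_\xi:=b_{q_\xi}\setminus N$; then $b_\xi\in\mathcal A^\perp$, and by clause (4) of $q_\xi$ at $\beta_{\xi,k}$ we have $a_{\beta_{\xi,k}}\cap b_{q_\xi}\subseteq n_k\subseteq N$, hence $a_{\beta_{\xi,k}}\cap b_\xi=\emptyset$. Apply pre-ccc to $\{\beta_{\xi,k}\}$ and $\{b_\xi\}$ to obtain $\xi<\eta$ with $a_{\beta_{\xi,k}}\subseteq a_{\beta_{\eta,k}}$ and $b_\xi\cap a_{\beta_{\eta,k}}=\emptyset$. I then claim $r:=(F^*,\sigma_{q_\xi}\cup\sigma_{q_\eta},b_{q_\xi}\cup b_{q_\eta})$ is a common extension; clauses (1)--(3) of the poset definition are immediate from $H_{q_\xi}\cap H_{q_\eta}=H^*$ and agreement on $H^*$. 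The substantive content of clause (4) reduces to verifying, for each $i$, that $(a_{\beta_{\xi,i}}\setminus n_i)\cap b_{q_\eta}=\emptyset$ and symmetrically $(a_{\beta_{\eta,i}}\setminus n_i)\cap b_{q_\xi}=\emptyset$. Splitting each target $b$ at $N$: below $N$ it equals $T$, and since $T\subseteq b_{q_\xi}\cap b_{q_\eta}$ by constancy, clause (4) of $q_\xi$ at $\beta_{\xi,i}$ (respectively $q_\eta$ at $\beta_{\eta,i}$) gives $a_{\beta_{\xi,i}}\cap T\subseteq n_i$ (respectively $a_{\beta_{\eta,i}}\cap T\subseteq n_i$); above $N$, the inclusion $E_i\subseteq N$ yields $a_{\beta_{\xi,i}}\setminus N\subseteq a_{\beta_{\xi,k}}\subseteq a_{\beta_{\eta,k}}$ (and similarly $a_{\beta_{\eta,i}}\setminus N\subseteq a_{\beta_{\eta,k}}$), and then clause (4) of $q_\eta$ at $\beta_{\eta,k}$ annihilates the first intersection (since $a_{\beta_{\eta,k}}\cap(b_{q_\eta}\setminus N)=\emptyset$), while the pre-ccc conclusion $b_\xi\cap a_{\beta_{\eta,k}}=\emptyset$ annihilates the second.

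The main technical obstacle is that pre-ccc delivers information tied only to the single cofinal top index $k$, whereas compatibility demands control at every $i\le k$, even when $n_i<n_k$ opens up a potentially problematic interval $[n_i,n_k)$ that pre-ccc cannot see. The resolution combines two stabilizations: first, absorbing the finite gap $E_i=a_{\beta_{\xi,i}}\setminus a_{\beta_{\xi,k}}$ into the initial segment $[0,N)$ reduces above-$N$ questions about $a_{\beta_{\xi,i}}$ to the top index where pre-ccc applies; second, stabilizing the finite trace $T=b_{q_\xi}\cap N$ so that $T\subseteq b_{q_\eta}$ simultaneously lets clause (4) of whichever of $q_\xi,q_\eta$ is natural bound $a_{\beta_{\cdot,i}}\cap T$ by $n_i$, making the bounded interval harmless.
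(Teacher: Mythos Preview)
Your proof is correct and follows essentially the same approach as the paper's: thin so that $F_{q_\xi}$ is constant, apply the $\Delta$-system lemma so that the $\sigma_{q_\xi}$'s agree on a root, arrange that the top indices $\max H_{q_\xi}$ form a cofinal increasing $\omega_1$-sequence, choose a threshold $N$ (the paper's $\bar m$) absorbing all the relevant finite data (the values of $\sigma_{q_\xi}$, the finite gaps $a_{\beta_{\xi,i}}\setminus a_{\beta_{\xi,k}}$, and the trace $b_{q_\xi}\cap N$), apply pre-ccc to the top indices and the tails $b_{q_\xi}\setminus N$, and verify that $(F^*,\sigma_{q_\xi}\cup\sigma_{q_\eta},b_{q_\xi}\cup b_{q_\eta})$ is a common extension. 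Your stabilization of the actual sets $E_i$ is slightly stronger than what the paper asks (it only needs $E_i\subseteq\bar m$, not $E_i$ constant), but this is harmless and your final explanatory paragraph nicely isolates the key point that the paper leaves implicit.
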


\begin{proof}
Let $\mathcal A = \{ a_\beta : \beta < \alpha\}$. 
  Let $\{ q_\xi : \xi \in \omega_1\}\subset Q = Q(\mathcal A;\mathcal
  I)$.    By passing to a subcollection we can suppose there is a single
  $F\in [\omega]^{<\aleph_0}$ such that $F_{q_\xi} = F$ for all
  $\xi$. For each $\xi$, let $b_\xi = b_{q_\xi}$, 
  $\sigma_\xi = \sigma_{q_\xi}$, and
  $H_\xi = \dom(\sigma_\xi)$.  By the standard 
$\Delta$-system lemma argument, we may assume that
$\sigma_\xi\cup\sigma_\eta$ is a function for all $\xi,\eta\in \omega_1$.

  For each $\xi$, let $\gamma_\xi$
  be the maximum element of $H_\xi$. By a trivial density argument we
  can assume that $\{\gamma_\xi : \xi\in\omega_1\}$ is a strictly
  increasing sequence that is cofinal in $\alpha$.

Next, we choose an integer $\bar m$ sufficiently large so that there
is again an countable $I\subset\omega_1$ and  a subset
$ \bar b$ of $\bar m$
such that, for all $\xi\in I$ and all $\beta\in H_\xi$
\begin{enumerate}
\item $\sigma_\xi (\beta) < \bar m$, 
\item $a_\beta\setminus \bar m\subset
  a_{\gamma_\xi}$,
\item $b_\xi\cap \bar m = \bar b$,
  \end{enumerate}
  Now we apply the pre-ccc property for the family
  $\{ \gamma_\xi : \xi\in I\}$ and the sequence
    $\{ b_\xi \setminus \bar m : \xi\in I\}$. Thus, we
may  choose $\xi<\eta$ from $I$ so that
  $a_{\gamma_\xi}\subset a_{\gamma_\eta}$ and
    $b_\xi\setminus \bar m$ is disjoint from $a_{\gamma_\eta}$. 
We claim that $r = (F, \sigma_\xi\cup \sigma_\eta, b_\xi\cup b_\eta)$
is in $Q$ and is an extension of each of $q_\xi$ and $q_\eta$.  It
suffices to  prove that for $\beta\in H_\xi$, $a_\beta\setminus
\sigma_\xi(\beta)$ is disjoint from $b_\eta$, and similarly, that
$a_\beta\setminus \sigma_\eta(\beta)$ is disjoint from $b_\xi$ for
 $\beta\in H_\eta$. Since $b_\xi\cap \bar m = b_\eta\cap \bar m = \bar
 b$, it suffices to consider $a_\beta\setminus \bar m$ in each case.
 For $\beta\in H_\xi$, we have $a_\beta\setminus \bar m \subset
  a_{\gamma_\xi}\subset a_{\gamma_\eta}$, and $a_{\gamma_\eta}$ is
  disjoint
  from $b_\eta\setminus \bar m$. For $\beta\in H_{\eta}$, we
  have $a_\beta\setminus \bar m \subset a_{\gamma_\eta}$ and
    $a_{\gamma_\eta}$ is disjoint from $b_\xi\setminus \bar m$.
  \end{proof}

  \begin{definition} $\mathfrak A$ is the class of triples
    $( \mathfrak P, \mathcal A, \mathfrak I)$ such
    that, there is an ordinal $\alpha$, and the
     following holds for each $\beta<\alpha$:
     \begin{enumerate}
     \item $\mathfrak  P
       =
(       \langle   \mathbb P_\beta :\beta\leq \alpha\rangle,
 \langle \dot Q_\beta  : \beta < \alpha   \rangle)$
 is a finite support iteration sequence of ccc posets,
     \item $\mathcal A $ is an
       $\alpha$-sequence $\{ \dot a_\beta : \beta < \alpha \}$,
       and \ $\forces{\mathbb P_{\beta+1}}{\dot a_\beta \subset \omega}$,
  \item $\mathfrak I$ is an $\alpha$-sequence
$ \{ \dot {\mathcal I}_\beta : \beta < \alpha \}$,
         \item $\forces{\mathbb
           P_\beta}{\dot{\mathcal I}_\beta
           \subset \{\dot a_\xi :
           \xi<\beta\}^\perp
           \ \mbox{is an ideal}}$
       \item for $\beta < \gamma < \alpha         $,
    $\forces {\mathbb P_{\gamma+1}}{\dot a_\beta\subset^* \dot
         a_\gamma} $
         and\  $\forces{\mathbb P_{\gamma}}{\dot{\mathcal
             I}_{\beta}\subset \dot{\mathcal I}_\gamma}$
         \item  $\forces{\mathbb P_\beta}{\dot
           Q_\beta = Q(\{\dot a_\xi : \xi<\beta\}; \dot {\mathcal
             I}_\beta)}$,
   \item 
 $\dot a_\beta$ is the $\mathbb P_{\beta+1}$-name for
           $\dot a_{Q(\{\dot a_\xi : \xi<\beta\}; \dot {\mathcal I}_\beta)}$
   \item if $\cf(\beta)=\omega$, then there is a sequence
     $\{ \dot{\mathcal I}_{\beta,\xi} : \xi<\beta\}$ such
     that $\dot{\mathcal I}_{\beta,\xi}$ is a $\mathbb P_{\xi}$-name
     and $\forces {\mathbb P_\beta}{\dot{\mathcal I}_{\beta}
       =\bigcup\{ \dot{\mathcal I}_{\beta,\xi} : \xi<\beta\}}$.
     \end{enumerate}
  \end{definition}

  \begin{lemma} If $\alpha$ is an ordinal with cofinality $\omega_1$,
    and\label{willbepreccc}
 if
    $( 
       \langle   \mathbb P_\beta :\beta\leq \alpha\rangle,
 \langle \dot Q_\beta  : \beta < \alpha   \rangle)
, \{ \dot a_\beta : \beta < \alpha \},
     \{ \dot {\mathcal I}_\beta : \beta < \alpha \})$ is in $\mathfrak
     A     $ then
$\forces{\mathbb P_\alpha}{ \{\dot a_\beta : \beta < \alpha\}
 \ \mbox{is a pre-ccc sequence}}$.
  \end{lemma}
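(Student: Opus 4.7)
The plan is to verify the pre-ccc property of $\{\dot a_\beta : \beta < \alpha\}$ in $V^{\mathbb{P}_\alpha}$ by the following route. Let $p \in \mathbb{P}_\alpha$ force the pre-ccc hypothesis with $\mathbb{P}_\alpha$-names $\{\dot\gamma_\xi : \xi<\omega_1\}$ and $\{\dot b_\xi : \xi<\omega_1\}$; I seek an extension $p' \leq p$ together with $\xi<\eta$ in $\omega_1$ so that $p'$ forces $a_{\dot\gamma_\xi} \subseteq a_{\dot\gamma_\eta}$ and $\dot b_\xi \cap a_{\dot\gamma_\eta} = \emptyset$. The ccc of $\mathbb{P}_\alpha$ (finite-support iteration of ccc posets) together with $\cf(\alpha) > \omega$ reflects each $\dot b_\xi$ to a $\mathbb{P}_{\beta_\xi}$-name for some $\beta_\xi < \alpha$, since any nice name for a subset of $\omega$ is supported by countably many antichains whose total support is bounded below $\alpha$. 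After strengthening to decide $\dot\gamma_\xi = \gamma_\xi$ and inductively extracting a cofinal subsequence, I may assume a club $C \subseteq \omega_1$ on which $\xi \mapsto \gamma_\xi$ is continuous, strictly increasing, cofinal in $\alpha$, and $\beta_\xi < \gamma_\xi$; so for $\xi < \eta$ in $C$, $\dot b_\xi$ is a $\mathbb{P}_{\gamma_\eta}$-name.

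I then single out $\eta \in C$ with $\cf(\eta) = \omega$; continuity of $\gamma$ forces $\cf(\gamma_\eta) = \omega$, so property (8) of the definition of $\mathfrak A$ supplies the decomposition $\dot{\mathcal I}_{\gamma_\eta} = \bigcup_{\zeta<\gamma_\eta} \dot{\mathcal I}_{\gamma_\eta,\zeta}$, with each piece a $\mathbb{P}_\zeta$-name. The crux is to locate $\xi < \eta$ in $C$ and an extension $p_* \leq p$ forcing $\dot b_\xi \in \dot{\mathcal I}_{\gamma_\eta,\gamma_\xi}$, and hence in $\dot{\mathcal I}_{\gamma_\eta}$. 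Once that is in hand, the triple $q^* := (\emptyset, \{(\gamma_\xi, 0)\}, \dot b_\xi)$ is forced by $p_*$ to be a valid element of $Q_{\gamma_\eta} = Q(\{\dot a_\zeta : \zeta<\gamma_\eta\}; \dot{\mathcal I}_{\gamma_\eta})$: $F_{q^*} = \emptyset$ is vacuously disjoint from $b_{q^*}$, $b_{q^*} = \dot b_\xi$ lies in $\dot{\mathcal I}_{\gamma_\eta}$, and the last clause of Definition \ref{defineQ} -- that $a_{\gamma_\xi} \setminus 0 = a_{\gamma_\xi}$ is disjoint from $b_{q^*}$ -- is exactly the pre-ccc hypothesis $a_{\dot\gamma_\xi} \cap \dot b_\xi = \emptyset$.

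Finally I append $q^*$ at the $\gamma_\eta$-coordinate of $p_*$ to obtain $p'$, which forces $q^* \in \dot G_{\gamma_\eta}$. Arguing as in the analysis preceding Proposition \ref{chain}, $\omega \setminus a_{\gamma_\eta}$ equals the union of the $b$-parts of conditions in $\dot G_{\gamma_\eta}$, and for any $r \leq q^*$ in the generic, $\sigma_r(\gamma_\xi) = \sigma_{q^*}(\gamma_\xi) = 0$ gives $a_{\gamma_\xi} \cap b_r = \emptyset$; by directedness of $\dot G_{\gamma_\eta}$ this yields $a_{\gamma_\xi} \subseteq a_{\gamma_\eta}$, and $\dot b_\xi \subseteq b_{q^*} \subseteq b_r$ likewise propagates to $\dot b_\xi \cap a_{\gamma_\eta} = \emptyset$.

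The principal obstacle is the middle paragraph's key step: securing $\dot b_\xi \in \dot{\mathcal I}_{\gamma_\eta,\gamma_\xi}$ under a suitable extension of $p$. I expect this to rest on monotonicity of the $\dot{\mathcal I}_\zeta$'s (property (5)) together with the fact that, along the $\omega$-cofinal chain of ideals approaching $\dot{\mathcal I}_{\gamma_\eta}$, the iteration must absorb cofinally many members of $\mathcal A^\perp$ into some $\dot{\mathcal I}_{\gamma_\eta,\zeta}$; the precise density argument, exploiting the interplay between $\cf(\alpha) = \omega_1$ (which gives the name-reflection reservoir) and $\cf(\gamma_\eta) = \omega$ (which provides the decomposition), is the central technical content of the lemma.
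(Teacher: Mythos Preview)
Your proposal has a genuine gap at precisely the point you identify as ``the principal obstacle,'' and the missing step cannot be supplied: the assertion you hope for is false for general members of $\mathfrak A$. Nothing in clauses (1)--(8) forces the ideals $\dot{\mathcal I}_\beta$ to absorb any nontrivial part of $\mathcal A^\perp$. For instance, the triple in which every $\dot{\mathcal I}_\beta$ is the Fr\'echet ideal $[\omega]^{<\omega}$ satisfies all eight clauses (clause~(8) trivially, with $\dot{\mathcal I}_{\beta,\zeta}=[\omega]^{<\omega}$ for all $\zeta$). Given any infinite $b\in\mathcal A^\perp$, the sets $b_\xi:=b\setminus a_{\gamma_\xi}$ satisfy the pre-ccc hypothesis and are all infinite, so no $b_\xi$ lies in any $\dot{\mathcal I}_{\gamma_\eta}$, and your $q^*=(\emptyset,\{(\gamma_\xi,0)\},\dot b_\xi)$ is never a legitimate element of $\dot Q_{\gamma_\eta}$. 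The ``density argument'' you anticipate therefore does not exist, and the outline collapses at exactly the place you feared.

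The paper's argument is organized quite differently. Rather than working at a single coordinate $\gamma_\eta$, it extends each $p_\delta$ to a \emph{determined} condition in which, at every $\beta\in\supp(p_\delta)$, the data $(F^\delta_\beta,\sigma^\delta_\beta,\dot b^\delta_\beta)$ are explicitly exhibited. Clause~(8) is invoked only to guarantee that at the limit coordinate $g(\delta)$ of countable cofinality the name $\dot b^\delta_{g(\delta)}$ already lives in some $\mathbb P_{\mu^\delta_{g(\delta)}}$ with $\mu^\delta_{g(\delta)}<g(\delta)$; Fodor's lemma then bounds these values below a fixed $\mu$ on a stationary set. After passing to a $\mathbb P_{\mu+1}$-generic, uniformizing all the finite data below a single integer $\bar m$, and picking $\xi<\eta$ from the resulting stationary set, the proof hand-builds amalgamated conditions $q_\xi$ and $q_\eta$ by modifying both the $\sigma$-parts and the $b$-parts \emph{at every coordinate of the supports}, and checks by induction along those supports that $r\cup q_\xi\cup q_\eta$ is a common extension forcing $a_{\gamma_\xi}\subset a_{\gamma_\eta}$ and $b_\xi\cap a_{\gamma_\eta}=\emptyset$. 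The heart of the argument is this coordinatewise fusion across the full supports of the $p_\delta$'s, not a single insertion at one iterand.
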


  \begin{proof}
Let $\dot{\mathcal A}$ denote the $\mathbb P_\alpha$-name of the 
 sequence $\{ \dot a_\beta : \beta <\alpha\}$. 
Let $\{ \dot{\gamma}_\xi : \xi\in \omega_1\}$ and
  $\{\dot b_\xi : \xi\in \omega_1\}$ be sequences of 
 $\mathbb P_{\alpha}$-names such that there is some $p_0\in \mathbb
 P_\alpha$ forcing that, for each $\xi<\omega_1$, 
  $\dot\gamma_\xi \in \alpha$, 
  $\dot b_\xi\in \dot {\mathcal A}^\perp$ and $
 \dot b_\xi \cap \dot a_{\gamma_\xi}$ is empty. Suppose
also that $p_0$ forces that $\{ \dot \gamma_\xi : \xi \in \omega_1\}$
is strictly increasing and cofinal in $\alpha$. We may assume that
$p_0$ decides the value, $\gamma_0$,  of $\dot \gamma_0$.
 For each
 $\xi<\omega_1$, choose any $p_\xi < p_0$ that decides a value,
 $\gamma_\xi$, of $\dot \gamma_\xi$
 and that $\dot b_\xi$ is a $\mathbb P_\beta$-name for some
 $\beta\in \supp(p_\xi)$. 

 Let $g$ be a continuous
strictly increasing function from $\omega_1$ into $\alpha$ with
cofinal range. Since $\mathbb P_\alpha$ is ccc we have, 
for each $\delta\in \omega_1$, 
the set $\{ \xi : \gamma_\xi < g(\delta)\}$ is countable. Therefore
there is a cub $C\subset\omega_1$ such that $g(\delta)\leq
\gamma_\delta$ for all $\delta\in C$. We may also arrange
that, for each $\delta\in C$ and $\xi<\delta$,
$\supp(p_\xi)\subset g(\delta)$.

For each $\delta\in C$, we may extend $p_\delta$ so as to ensure
that each of 
$g(\delta)$ and $\gamma_\delta$
are in  $\supp(p_\delta)$,
and such that there is a $\beta\in \supp(p_\xi)$ such
 that $\dot b_\xi$ is a $\mathbb P_{\beta}$-name.
We also extend each $p_\delta$ so that we can arrange a list of
special properties (referred  to as ``determined'' in many similar
constructions). Specifically, for each $\beta\in \supp(p_\delta)$,
\begin{enumerate}
\item there are $F^\delta_\beta\in [\omega]^{<\aleph_0}$,
  $H^\delta_\beta \in [\beta]^{<\aleph_0}$, $\sigma^\delta_\beta :
   H^\delta_\beta \rightarrow \omega$, and a $\mathbb P_\beta$-name
   $\dot b^\delta_\beta$ such that
    $p_\delta\trestriction \beta \forces{\mathbb
      P_\beta}{p_\delta(\beta)
      = (F^\delta_\beta, \sigma^\delta_\beta, \dot b^\delta_\beta)}$,
\item if $g(\delta)<\gamma_\delta$, then $g(\delta)\in
  H^\delta_{\gamma_\delta}$, 
\item $H^\delta_\beta\subset \supp(p_\delta)$,
    \item if $\beta$ is a limit with countable cofinality, then
      there is a $\mu^\delta_\beta<\beta$ such that
        $\dot b^\delta_\beta$ is a $\mathbb
        P_{\mu^\delta_\beta}$-name and $\supp(p_\delta)\cap
        \beta\subset
         \mu^\delta_\beta$,
      \item if $\iota <\beta$ is in $\supp(p_\delta)$, then
                $H^\delta_\iota\supset H^\delta_\beta\cap\iota$,
              \end{enumerate}

By the pressing down lemma,  there is a stationary set               
$S\subset C$
and a $\mu<\alpha$ such that $\mu^\delta_{g(\delta)}<\mu$
for all $\delta\in S$. Now let $G_{\mu+1}$ be $\mathbb
P_{\mu+1}$-generic filter satisfying that, in the extension, 
there is a stationary set $S_1\subset S$ so that
$p_\xi\trestriction\mu\in G_{\mu+1}$ for all $\xi\in S_1$.
We may also arrange that the values of the pair $\{F^\xi_{g(\xi)},
F^\xi_{\gamma_\xi}\}$ is the same for all $\xi\in S_1$.
For all $\beta\in \mu+1$, we let $a_\beta$ denote the valuation of
 $\dot a_\beta$ by $G_{\mu+1}$.
By further
shrinking $S_1$ we may suppose there is an $\bar m\in\omega$
and a $\bar b\subset\bar m$,
 satisfying that, for all $\delta\in S_1$,
 \begin{enumerate}
 \item  for all $\beta\in \supp(p_\delta)$
   $F^\delta_\beta\subset \bar m$, 
and, for  all $\iota\in H^\delta_\beta$,
   $\sigma^\delta_\beta(\iota) < \bar   m$,
  \item for all $\beta\in \supp(p_\delta)\cap \mu$, $a_\iota\setminus
    a_\mu \subset \bar m$
    \item $\bar b = \bar m\cap b^\delta_{g(\delta)}$, where
$b^\delta_{g(\delta)}$ is  the valuation of $\dot
b^\delta_{g(\delta)}$ by $G_{\mu+1}$,
\item $b^\delta_{g(\delta)}\cap a_\mu \subset \bar m$.
  \end{enumerate}

\noindent  Fix any $\xi<\eta$ from $S_1$. Define $q_\xi$ so that
$\supp(q_\xi) =\supp(p_\xi)\setminus \mu+1$,
and, for $\beta\in \supp(q_\xi)$,
\[q_\xi(\beta) = \begin{cases}
(F^\xi_{g(\xi)},
      \sigma^\xi_{g(\xi)}\cup \{(\mu,\bar m)\}, \dot
      b^\xi_{g(\xi)}\cup
      \dot b^\eta_{g(\eta)}) & \mbox{if}\ \beta = g(\xi)\\
 (F^\xi_\beta, \sigma^\xi_\beta, \dot b^\xi_\beta\cup
      b^\eta_{g(\eta)}\setminus\bar m)& \mbox{if}\ g(\xi)<\beta\ . 
    \end{cases}
  \]

    \noindent We prove by induction on $\beta\in \supp(q_\xi)$,
    that there is a condition  $r^\xi_\beta\in G_{\mu+1}$ 
such    that $r^\xi_\beta\cup (q_\xi\trestriction (\beta+1)) \leq
p_\xi\trestriction (\beta+1)$.
 Evidently, for the case $\beta=g(\xi)$, 
 $F^\xi_{g(\xi)}$ and $\bar m\cap a_\iota$ are disjoint
from $\bar b$ and so 
 there is some condition in $G_{\mu+1}$ that forces that,
they are 
disjoint from $ \dot b^\eta_{g(\eta)}$. Similarly, 
for $\iota\in H^\xi_{g(\xi)}$, $a_\iota\setminus \bar m\subset a_\mu$,
and since $a_\mu\setminus \bar m \cap (b^\xi_{g(\xi)}\cup
b^\eta_{g(\eta)})$ is empty, there is a condition $r$
in $ G_{\mu+1}$ that
forces that $q_\xi(g(\xi))\in \dot Q_{g(\xi)}$
and that   $q_\xi(g(\xi)) < p_\xi(g(\xi))$.
In addition, $r\cup q_\xi\trestriction (g(\xi)+1)$ forces that
 $\dot a_{g(\xi)}$ is disjoint from $b^\eta_{g(\eta)}\setminus \bar
 m$.  Now, suppose that 
$g(\xi)<\beta\in \supp(p_\xi)$, and
 that $r\cup q\trestriction \beta $ is a condition
 in $\mathbb P_\beta$ that is below $p_\xi\trestriction \beta$. We
 recall that $H^\xi_\beta\subset \supp(p_\xi)$, and so it follows
 that $r\cup q_\xi\trestriction \beta$ forces that $\dot a_\iota $
 is disjoint from $b^\eta_{g(\eta)}\setminus \bar m$ for all $\iota\in
 H^\xi_\beta$. This is the only thing that needs verifying when
 checking that $r\cup q_\xi\trestriction (\beta+1) <
  p_\xi\trestriction (\beta+1)$. 

  Now that we have that $r\cup q_\xi$ forces that
  $\dot a_{\gamma_\xi}$ is disjoint from $b^\eta_{g(\eta)}\setminus
  \bar m$, we can add $\{(\gamma_\xi,\bar m)\}$ to
  $\sigma^\eta_{g(\eta)}$ and still have a condition. Similarly,
  for all $\iota\in \supp(p_\eta)\cap \mu$, $a_\iota\setminus \bar m$
  is contained in $a_\mu$,
  and    $r\cup q_\xi$ forces that $a_\mu\setminus \bar m$,
   being a subset of $
 \dot a_{\gamma_\xi}$, is disjoint from $\dot b_\xi$. 
This implies that $r\cup q_\xi$ forces that 
  $(F^\eta_{g(\eta)}, \sigma^\eta_{g(\eta)}\cup \{(\gamma_\xi,\bar m)\},
b^\eta_{g(\eta)}\cup (\dot b_\xi\setminus \bar m))$ is a condition
in $\dot Q_{g(\eta)}$ and is less than $p_\eta(g(\eta))$. 
Now we define a condition $q_\eta$ so that $\supp(q_\eta) =
\supp(p_\eta)\setminus \mu+1$, and, for $\beta\in \supp(q_\eta)$,
\[q_\eta(\beta) = \begin{cases}
(F^\eta_{g(\eta)},
      \sigma^\eta_{g(\eta)}\cup \{(\gamma_\xi,\bar m)\}, \dot
      b^\eta_{g(\eta)}\cup
      \dot b_{\gamma_\xi}) & \mbox{if}\ \beta = g(\xi)\\
 (F^\eta_\beta, \sigma^\eta_\beta, \dot b^\eta_\beta\cup
     (\dot  b_{\gamma_\xi}\setminus\bar m))& \mbox{if}\ 
g(\eta)<\beta\ . 
    \end{cases}
  \]
\noindent It again follows, by induction on $\beta\in \supp(q_\eta)$,
that $r\cup q_\xi$ forces that $r\cup q_\xi\cup (q_\eta\trestriction
(\beta+1))$ is a condition in $\mathbb P_{\beta+1}$ and is below
$p_\eta\trestriction (\beta+1)$. Finally, we observe that
$r\cup q_\xi\cup q_\eta$ forces that $\dot a_{\gamma_\xi}\subset
\dot a_{\gamma_\eta}$ because it forces that
$\dot a_{\gamma_\xi} \cap \bar m =  \dot a_{\gamma_\eta} \cap \bar m$
and that
$\dot a_{\gamma_\xi}\setminus  \bar m \subset \dot
a_{g(\eta)}\setminus
\bar m\subset \dot a_{\gamma_\eta}$. Similarly
$r\cup q_\xi\cup q_\eta$ forces that $\dot a_{\gamma_\eta}$
is disjoint from $\dot b_\xi$ because
$\dot b_\xi\cap \bar m = \bar b$ and
$\dot a_{\gamma_\eta}$ is disjoint from  $\dot b_\xi\setminus \bar m $.
\end{proof}

  \begin{corollary}
If\label{willbeccc}     
$( 
       \langle   \mathbb P_\beta :\beta\leq \alpha\rangle,
 \langle \dot Q_\beta  : \beta < \alpha   \rangle)
, \{ \dot a_\beta : \beta < \alpha \},
     \{ \dot {\mathcal I}_\beta : \beta < \alpha \})$ is in $\mathfrak
     A     $, then 
$\forces {\mathbb P_\alpha}{
       Q(\{\dot a_\beta : \beta < \alpha\}; \dot{\mathcal I}) \ \mbox{is ccc}}
     $ for each $\mathbb P_\alpha$-name $\dot{\mathcal I}$ satisfying
     that $\forces{\mathbb P_\alpha}{\dot{\mathcal I} \subset
       \{\dot a_\beta : \beta <\alpha \}^\perp
       \ \mbox{is an ideal}}$.      
   \end{corollary}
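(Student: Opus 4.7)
The plan is a straightforward case analysis on $\cf(\alpha)$, with all the real work already done by the preceding Lemmas \ref{noncf} and \ref{willbepreccc} (together with the lemma that pre-ccc sequences yield ccc posets).

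First I would handle the case $\cf(\alpha)\neq\omega_1$ (including the cases $\alpha$ successor, $\cf(\alpha)=\omega$, and $\cf(\alpha)\geq\omega_2$). In this case we simply work in the $\mathbb P_\alpha$-generic extension. By clauses (2) and (5) in the definition of $\mathfrak A$, the valuation of $\{\dot a_\beta:\beta<\alpha\}$ is a $\subset^*$-increasing chain of subsets of $\omega$ of length $\alpha$. By hypothesis $\dot{\mathcal I}$ is forced to be an ideal contained in $\{\dot a_\beta:\beta<\alpha\}^\perp$. Lemma \ref{noncf} applied in the extension then directly gives that $Q(\{\dot a_\beta:\beta<\alpha\};\dot{\mathcal I})$ is ccc.

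Next I would handle the remaining case $\cf(\alpha)=\omega_1$. Here Lemma \ref{willbepreccc} applies and tells us that $\mathbb P_\alpha$ forces $\{\dot a_\beta:\beta<\alpha\}$ to be a pre-ccc sequence. Once this is granted, the lemma immediately following the definition of pre-ccc sequences says that $Q(\mathcal A;\mathcal I)$ is ccc for any ideal $\mathcal I\subset\mathcal A^\perp$ whenever $\mathcal A$ is pre-ccc. Applying this inside the $\mathbb P_\alpha$-extension to $\mathcal A=\{\dot a_\beta:\beta<\alpha\}$ and $\mathcal I=\dot{\mathcal I}$ gives the conclusion.

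Combining the two cases completes the proof. There is no genuine obstacle here: the corollary is really just the formal packaging that matches Lemma \ref{noncf} to the complementary case treated by Lemma \ref{willbepreccc}. The only thing worth stressing in the write-up is that the hypotheses of $\mathfrak A$ (specifically clauses (2), (4), (5)) guarantee both that $\{\dot a_\beta:\beta<\alpha\}$ is genuinely a $\subset^*$-increasing chain in the extension and that there is no obstruction to our choice of $\dot{\mathcal I}$ beyond being an ideal in the perp, so that both lemmas are applicable verbatim.
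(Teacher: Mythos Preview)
Your proof is correct and matches the paper's intended approach: the paper states this result as a corollary with no proof, precisely because it is the immediate combination of Lemma~\ref{noncf} (for $\cf(\alpha)\neq\omega_1$) with Lemma~\ref{willbepreccc} and the pre-ccc lemma (for $\cf(\alpha)=\omega_1$), exactly as you lay out.
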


   \bgroup

\def\proofname{Proof of Theorem \ref{forcing}\/.~}

\begin{proof}
  We simply adapt the proof of Lemma \ref{omega1case} so as to ensure
  that, for each $\alpha<\kappa$,
 $$( 
       \langle   \mathbb P_\beta :\beta\leq \alpha\rangle,
 \langle \dot Q_\beta  : \beta < \alpha   \rangle)
, \{ \dot a_\beta : \beta < \alpha \},
\{ \dot {\mathcal I}_\beta : \beta < \alpha \})\
\mbox{is in }\ \mathfrak
A     ~~.$$
The only change to the proof is that when $\beta<\kappa$ is a limit
with countable cofinality, the definition of $\dot{\mathcal I}_\beta$
is equal to the union of the sequence
$\{ \dot{\mathcal I}_{\xi+1} : \xi < \beta\}$. With this change,
 we recursively have ensured that our iterations remain in $\mathfrak
 A$, and by Corollary \ref{willbeccc}, our iteration is ccc.
 All the details showing that the closure,
 $\dot A$, of the union of the chain $\{ \dot a_\alpha^* :
 \alpha<\kappa\}$ is forced to be almost clopen go through as
 in Lemma \ref{omega1case}. Similarly, it follows by recursion
 that the initial family $\{ b_\zeta :\zeta<\lambda\}$ is a
 $\subset^*$-unbounded subfamily of the ideal
    $\{ \dot a_\alpha : \alpha <\kappa\}^\perp$.
\end{proof}

   \egroup

\begin{bibdiv}

\def\cprime{$'$} 

\begin{biblist}

\bib{BaPFA}{article}{
   author={Baumgartner, James E.},
   title={Applications of the proper forcing axiom},
   conference={
      title={Handbook of set-theoretic topology},
   },
   book={
      publisher={North-Holland, Amsterdam},
   },
   date={1984},
   pages={913--959},
   review={\MR{776640}},
}	

\bib{BlassSh}{article}{
   author={Blass, Andreas},
   author={Shelah, Saharon},
   title={There may be simple $P_{\aleph_1}$- and $P_{\aleph_2}$-points and
   the Rudin-Keisler ordering may be downward directed},
   journal={Ann. Pure Appl. Logic},
   volume={33},
   date={1987},
   number={3},
   pages={213--243},
   issn={0168-0072},
   review={\MR{879489}},
   doi={10.1016/0168-0072(87)90082-0},
}

\bib{vDHandbook}{collection}{
   title={Handbook of set-theoretic topology},
   editor={Kunen, Kenneth},
   editor={Vaughan, Jerry E.},
   publisher={North-Holland Publishing Co., Amsterdam},
   date={1984},
   pages={vii+1273},
   isbn={0-444-86580-2},
   review={\MR{776619 (85k:54001)}},
}

\bib{vDKvM}{article}{
   author={van Douwen, Eric K.},
   author={Kunen, Kenneth},
   author={van Mill, Jan},
   title={There can be $C^*$-embedded dense proper subspaces in
   $\beta\omega-\omega$},
   journal={Proc. Amer. Math. Soc.},
   volume={105},
   date={1989},
   number={2},
   pages={462--470},
   issn={0002-9939},
   review={\MR{977925}},
   doi={10.2307/2046965},
}
 
\bib{DSh1}{article}{
   author={Dow, Alan},
   author={Shelah, Saharon},
   title={Tie-points and fixed-points in $\mathbb N^*$},
   journal={Topology Appl.},
   volume={155},
   date={2008},
   number={15},
   pages={1661--1671},
   issn={0166-8641},
   review={\MR{2437015}},
   doi={10.1016/j.topol.2008.05.002},
}

\bib{DSh2}{article}{
   author={Dow, Alan},
   author={Shelah, Saharon},
   title={More on tie-points and homeomorphism in $\mathbb N^\ast$},
   journal={Fund. Math.},
   volume={203},
   date={2009},
   number={3},
   pages={191--210},
   issn={0016-2736},
   review={\MR{2506596}},
   doi={10.4064/fm203-3-1},
}

\bib{DSh3}{article}{
   author={Dow, Alan},
   author={Shelah, Saharon},
   title={An Efimov space from Martin's axiom},
   journal={Houston J. Math.},
   volume={39},
   date={2013},
   number={4},
   pages={1423--1435},
   issn={0362-1588},
   review={\MR{3164725}},
}

\bib{DR}{article}{
   author={Drewnowski, Lech},
   author={Roberts, James W.},
   title={On the primariness of the Banach space $l_{\infty}/C_0$},
   journal={Proc. Amer. Math. Soc.},
   volume={112},
   date={1991},
   number={4},
   pages={949--957},
   issn={0002-9939},
   review={\MR{1004417}},
   doi={10.2307/2048638},
}

\bib{Farah}{article}{
   author={Farah, Ilijas},
   title={Analytic quotients: theory of liftings for quotients over analytic
   ideals on the integers},
   journal={Mem. Amer. Math. Soc.},
   volume={148},
   date={2000},
   number={702},
   pages={xvi+177},
   issn={0065-9266},
   review={\MR{1711328}},
   doi={10.1090/memo/0702},
}

\bib{FZ}{article}{
   author={Frankiewicz, R.},
   author={Zbierski, P.},
   title={Strongly discrete subsets in $\omega^*$},
   journal={Fund. Math.},
   volume={129},
   date={1988},
   number={3},
   pages={173--180},
   issn={0016-2736},
   review={\MR{962539}},
}

\bib{FG}{article}{
   author={Fine, N. J.},
   author={Gillman, L.},
   title={Extension of continuous functions in $\beta N$},
   journal={Bull. Amer. Math. Soc.},
   volume={66},
   date={1960},
   pages={376--381},
   issn={0002-9904},
   review={\MR{0123291}},
   doi={10.1090/S0002-9904-1960-10460-0},
}

\bib{GoSh}{article}{
   author={Goldstern, M.},
   author={Shelah, S.},
   title={Ramsey ultrafilters and the reaping number---${\rm Con}({\germ
   r}<{\germ u})$},
   journal={Ann. Pure Appl. Logic},
   volume={49},
   date={1990},
   number={2},
   pages={121--142},
   issn={0168-0072},
   review={\MR{1077075}},
   doi={10.1016/0168-0072(90)90063-8},
}

\bib{JKS}{article}{
   author={Juh\'asz, Istv\'an},
   author={Koszmider, Piotr},
   author={Soukup, Lajos},
   title={A first countable, initially $\omega_1$-compact but non-compact
   space},
   journal={Topology Appl.},
   volume={156},
   date={2009},
   number={10},
   pages={1863--1879},
   issn={0166-8641},
   review={\MR{2519221}},
   doi={10.1016/j.topol.2009.04.004},
}

\bib{Just}{article}{
   author={Just, Winfried},
   title={Nowhere dense $P$-subsets of $\omega$},
   journal={Proc. Amer. Math. Soc.},
   volume={106},
   date={1989},
   number={4},
   pages={1145--1146},
   issn={0002-9939},
   review={\MR{976360}},
   doi={10.2307/2047305},
}

\bib{Katetov}{article}{
   author={Kat\v etov, M.},
   title={A theorem on mappings},
   journal={Comment. Math. Univ. Carolinae},
   volume={8},
   date={1967},
   pages={431--433},
   issn={0010-2628},
   review={\MR{0229228}},
}

\bib{Kopp}{article}{
   author={Koppelberg, Sabine},
   title={Minimally generated Boolean algebras},
   journal={Order},
   volume={5},
   date={1989},
   number={4},
   pages={393--406},
   issn={0167-8094},
   review={\MR{1010388}},
   doi={10.1007/BF00353658},
}

\bib{KoszTAMS}{article}{
   author={Koszmider, Piotr},
   title={Forcing minimal extensions of Boolean algebras},
   journal={Trans. Amer. Math. Soc.},
   volume={351},
   date={1999},
   number={8},
   pages={3073--3117},
   issn={0002-9947},
   review={\MR{1467471}},
   doi={10.1090/S0002-9947-99-02145-5},
}

\bib{KunenBook}{book}{
   author={Kunen, Kenneth},
   title={Set theory},
   series={Studies in Logic and the Foundations of Mathematics},
   volume={102},
   note={An introduction to independence proofs},
   publisher={North-Holland Publishing Co., Amsterdam-New York},
   date={1980},
   pages={xvi+313},
   isbn={0-444-85401-0},
   review={\MR{597342}},
}

\bib{LW}{article}{
   author={Leonard, I. E.},
   author={Whitfield, J. H. M.},
   title={A classical Banach space: $l_{\infty }/c_{0}$},
   journal={Rocky Mountain J. Math.},
   volume={13},
   date={1983},
   number={3},
   pages={531--539},
   issn={0035-7596},
   review={\MR{715776}},
   doi={10.1216/RMJ-1983-13-3-531},
}

\bib{koszOpenII}{collection}{
   title={Open problems in topology. II},
   editor={Pearl, Elliott},
   publisher={Elsevier B. V., Amsterdam},
   date={2007},
   pages={xii+763},
   isbn={978-0-444-52208-5},
   isbn={0-444-52208-5},
   review={\MR{2367385}},
}

\bib{Rabus2}{article}{
   author={Rabus, Mariusz},
   title={On strongly discrete subsets of $\omega^\ast$},
   journal={Proc. Amer. Math. Soc.},
   volume={118},
   date={1993},
   number={4},
   pages={1291--1300},
   issn={0002-9939},
   review={\MR{1181172}},
   doi={10.2307/2160090},
}

\bib{Rabus}{article}{
   author={Rabus, Mariusz},
   title={An $\omega_2$-minimal Boolean algebra},
   journal={Trans. Amer. Math. Soc.},
   volume={348},
   date={1996},
   number={8},
   pages={3235--3244},
   issn={0002-9947},
   review={\MR{1357881}},
   doi={10.1090/S0002-9947-96-01663-7},
}

\bib{Sapirbpoint}{article}{
   author={\v Sapirovski\u\i , B. \`E.},
   title={The imbedding of extremally disconnected spaces in bicompacta.
   $b$-points and weight of pointwise normal spaces},
   language={Russian},
   journal={Dokl. Akad. Nauk SSSR},
   volume={223},
   date={1975},
   number={5},
   pages={1083--1086},
   issn={0002-3264},
   review={\MR{0394609}},
}	

\bib{ShStPFA}{article}{
   author={Shelah, Saharon},
   author={Stepr\=ans, Juris},
   title={PFA implies all automorphisms are trivial},
   journal={Proc. Amer. Math. Soc.},
   volume={104},
   date={1988},
   number={4},
   pages={1220--1225},
   issn={0002-9939},
   review={\MR{935111}},
   doi={10.2307/2047617},
}

\bib{ShSt}{article}{
   author={Shelah, Saharon},
   author={Stepr\=ans, Juris},
   title={Somewhere trivial autohomeomorphisms},
   journal={J. London Math. Soc. (2)},
   volume={49},
   date={1994},
   number={3},
   pages={569--580},
   issn={0024-6107},
   review={\MR{1271551}},
   doi={10.1112/jlms/49.3.569},
}

\bib{Steprans}{article}{
   author={Stepr\=ans, Juris},
   title={The autohomeomorphism group of the \v Cech-Stone compactification of
   the integers},
   journal={Trans. Amer. Math. Soc.},
   volume={355},
   date={2003},
   number={10},
   pages={4223--4240},
   issn={0002-9947},
   review={\MR{1990584}},
   doi={10.1090/S0002-9947-03-03329-4},
}

\bib{Vel93}{article}{
   author={Veli\v ckovi\'c, Boban},
   title={${\rm OCA}$ and automorphisms of ${\scr P}(\omega)/{\rm fin}$},
   journal={Topology Appl.},
   volume={49},
   date={1993},
   number={1},
   pages={1--13},
   issn={0166-8641},
   review={\MR{1202874}},
   doi={10.1016/0166-8641(93)90127-Y},
}

\end{biblist}
\end{bibdiv}

\end{document}